\def\tank#1{\protected@xdef\@thanks{\@thanks
		\protect\footnotetext[0]{#1}}}
\def\bigfoot{
	
	\@footnotetext}
\newcommand{\ea}{\end{array}}
\numberwithin{equation}{section}
\newtheorem{theorem}{Theorem}[section]
\newtheorem{lemma}[theorem]{Lemma} 
\newtheorem{proposition}[theorem]{Proposition} 
\newtheorem{corollary}[theorem]{Corollary} 
\newtheorem{cor}[theorem]{Corollary}
\newtheorem{condition}[theorem]{Condition} 
\def\beq{\begin{equation}}
\def\nneq{\end{equation}}
\def\bthm{\begin{theorem}}
\def\nthm{\end{theorem}}
\def\blem{\begin{lemma}}
\def\nlem{\end{lemma}}
\def\bprf{\begin{proof}}
\def\nprf{\end{proof}}
\def\bprop{\begin{prop}}
\def\nprop{\end{prop}}
\def\brmk{\begin{rem}}
\def\nrmk{\end{rem}}
\def\bexa{\begin{exa}}
\def\nexa{\end{exa}}
\def\bcor{\begin{cor}}
\def\ncor{\end{cor}}
\newcommand\HH{\mathcal H}
\def\e{{\varepsilon}}
 \def\a{\boldsymbol {a}}
 \def\x{\boldsymbol {x}}
\def\y{\boldsymbol {y}}
\def\z{\boldsymbol {z}}
\def\h{\boldsymbol {h}}
\def\w{\boldsymbol {w}}
\title[Analysis of the gradient  for the stochastic  fractional heat equation]{Analysis of the gradient  for the stochastic  fractional heat equation with spatially-colored noise in $\mathbb R^d$}
\author[R. Wang]{Ran Wang}
\address[]{Ran Wang, School of Mathematics and Statistics,  Wuhan University,  Wuhan, 430072,
China.}
\email{rwang@whu.edu.cn}
\date{}
\begin{document}
\maketitle

 \noindent {\bf Abstract:}  Consider  the stochastic partial differential equation
  \begin{equation*}
    \frac{\partial }{\partial t}u_t(\x)= -(-\Delta)^{\frac{\alpha}{2}}u_t(\x) +b\left(u_t(\x)\right)+\sigma\left(u_t(\x)\right) \dot F(t, \x), \ \ \ t\ge0, \x\in \mathbb R^d,
    \end{equation*}
    where  $-(-\Delta)^{\frac{\alpha}{2}}$ denotes  the fractional Laplacian with the power $\alpha/2\in (1/2,1]$,  and the driving noise $\dot F$ is  a centered  Gaussian random field which is white in time and with a spatial homogeneous covariance given by the Riesz kernel. We study the detailed behavior of the approximation spatial gradient $u_t(\x)-u_t(\x-\e \boldsymbol  e)$ at any fixed time $t>0$, as $\e\downarrow 0$, where $\boldsymbol e$ is  {a unit vector} in $\mathbb R^d$. As applications, we deduce the law of iterated logarithm and  the behavior of the $q$-variations of the solution in space.

     \vskip0.3cm
 \noindent{\bf Keyword:} {Stochastic heat equation; Fractional Brownian motion; Fractional Laplacian; Gradient estimates.  }
 \vskip0.3cm

\noindent {\bf MSC: } {60H15; 60G17.}
\vskip0.3cm

     \section{Introduction}

    Consider the stochastic fractional heat equation
    \begin{equation}\label{eq SFBE}
    \frac{\partial }{\partial t}u_t(\x)= -(-\Delta)^{\frac{\alpha}{2}}u_t(\x) +b(u_t(\x))+\sigma(u_t(\x)) \dot F(t, \x), \ \ \ t\ge0, \x\in \mathbb R^d,
    \end{equation}
where $b$ and $\sigma$ are assumed to be  Lipschitz continuous functions, $\alpha\in (1,2]$ is a fixed ``spatial scaling" parameter, $-(-\Delta)^{\frac{\alpha}{2}}$ denotes  the fractional Laplacian with the power $\alpha/2$,  and the driving noise $\dot F$ is  a centered  Gaussian field which is white in time and with a spatial homogeneous covariance given by the Riesz kernel $f(x)=c_{1,1}\|x\|^{-(d-\gamma)}$ of the form:
 \begin{align}\label{eq noise}
 \mathbb E\left[\dot F(t,\x) \dot F(s,\y) \right]=c_{1,1}\delta_0(t-s)\|\x-\y\|^{-(d-\gamma)}, \ \ 0<\gamma<d,
 \end{align}
 where $\delta_0$ denotes the Dirac delta function and
  \begin{align}\label{eq c11}
  c_{1,1}=  2^{d-\gamma}\pi^{\frac{d}{2}}\Gamma\left((d-\gamma)/2\right) /\Gamma\left( \gamma/2\right).
 \end{align}

Throughout this paper, we assume that
    \begin{condition}\label{cond u0}
    \begin{itemize}
     \item[(a)] $\alpha\in (1,2]$, $d\ge1$ and $\gamma\in ((d-\alpha)_+, d)$.
    \item[(b)]Let $u_0=\{u_0(\x)\}_{\x\in \mathbb R^d}$ be a random field. Assume that there exist real numbers $k_0>\max\{2,\, 2/(2+d-\alpha-\gamma)\}$, $\eta_0\in (\frac{\alpha-d+\gamma}{\alpha},1]$  and $c_{1,2}>0$ such that
    \begin{align}\label{eq u0 1}
    \sup_{\x\in \mathbb R^d}\mathbb E\left[ |u_0(\x)|^{k_0}\right]<\infty
    \end{align}
    and
    \begin{align}\label{eq u0 2}
     \mathbb E\left[|u_0(\x)-u_0(\x+\h)|^{k_0} \right]\le c_{1,2}\|\h\|^{\eta_0 k_0},
    \end{align}
    uniformly for all $\x, \h\in \mathbb R^d$.
  \item[(c)] The functions  $b$ and $\sigma:  \mathbb R \rightarrow \mathbb R$ are   Lipschitz continuous, that is,
        \begin{align}\label{eq Lip}
    |b(r)-b(v)| \le L_{b}|r-v|, \,\,\,    |\sigma(r)-\sigma(v)|\le L_{\sigma}|r-v|  \ \ \ \text{for } r,v\in \mathbb R,
       \end{align}
       where $L_b$ and $L_{\sigma}$ are some finite positive constants.
       \end{itemize}
          \end{condition}

          Under  Condition \ref{cond u0},    by the theory of Dalang \cite{Dalang1999}, there exists a unique continuous solution of   \eqref{eq SFBE}, satisfying  that for any $T>0,k\in[2,k_0]$,
       \begin{align}\label{eq u moment}
    \sup_{(t,\x)\in [0,T]\times \mathbb R^d}\mathbb E\left[|u_t(\x)|^{k} \right]<+\infty.
       \end{align}
        See, for instance, \cite{ANTV2022} and references therein for details.

        When $d=1$ and $F$ is the space-time white noise, Foondun et al. \cite{FKM2015} utilized an approximation approximation approach to study the local and variational properties of the spatial processes $\{u_t(\x)\}_{\x\in \mathbb R}$, where $t>0$ is fixed. Their key idea in \cite{FKM2015} is to show that, as $\varepsilon \downarrow0$,
        $$
        u_t(\x)-u_{t}(\x-\e)\approx c_{1,3}\sigma(u_t(\x)) \left[B^{H_0}(\x)-B^{H_0}(\x-\e) \right] \ \ \   \text{in certain sense},
        $$
 where $c_{1,3}= (2\Gamma(\alpha)|\cos(\alpha\pi/2)|)^{-1/2}$, and $B^{H_0}$ denotes a  fractional Brownian motion (fBm, for short) with Hurst index
        $H_0= (\alpha-1)/2\in (0,1/2]$.    They were able to quantify the size of the approximation error by controlling its moments. Consequently, some of the local properties of
        $\x\mapsto u_t(\x)$  can be derived from those of fBm  $B^{H_0}$.

        In this paper, we consider the stochastic fractional heat equation (1.1) with the time-white and space-colored Gaussian noise and extend the approximation approach in \cite{FKM2015} to the more general setting.  In this case,  the solution will be related to the isotropic multiparameter   fractional Brownian motion     (also known as the L\'evy fBm) $\{B^H(\x)\}_{\x\in \mathbb R^d}$, which   is defined  as a centered Gaussian process, starting from zero, with covariance function
     \begin{align}\label{eq Isot fBM}
     \mathbb E\left[B^H(\x)B^H(\y) \right]=\frac12\left(\|\x\|^{2H}+\|\y\|^{2H}-\|\x-\y\|^{2H}\right)   \ \  \ \  (\x, \y\in \mathbb R^d).
     \end{align}
      {
    Here,  the Hurst index is  $H=(\alpha-d+\gamma)/{2} \in (0, 1)$,   which will be used throughout the whole of this paper.}

     Our main result is  the following.
    \begin{theorem}\label{thm spatial gradient} Under Condition \ref{cond u0},
    for every fixed $t>0$, there exists   an isotropic multiparameter fBm $\{B^{H}(\x)\}_{\x\in \mathbb R^d}$,    with Hurst index $H=(\alpha-d+\gamma)/2$     such that for all $\lambda>0$,
    \begin{align}\label{eq spatial gradient }
    \lim_{\varepsilon\downarrow0}\sup_{\x\in \mathbb R^d}\sup_{\|\boldsymbol {e}\|=1  }\mathbb P\left(\left| \frac{u_t(\x)-u_t(\x-\varepsilon \boldsymbol {e})}{B^{H}(\x)-B^{H}(\x-\varepsilon \boldsymbol {e})}-  c_{\alpha, \gamma,d} \sigma(u_t(\x))   \right| >\lambda\right)=0,
\end{align}
where  $c_{\alpha, \gamma,d}$ is the following numerical constant:
  \begin{align}\label{eq constant c}
   c_{\alpha, \gamma,d}=(2\pi)^{-d/2}\left(\int_{\mathbb R^d} \|\boldsymbol  w\|^{-(\alpha+\gamma)}\big(1-\cos(\boldsymbol w\cdot \boldsymbol e)\big)d\boldsymbol  w\right)^{1/2},
   \end{align}
with  $\boldsymbol e$ being a unit vector in $\mathbb R^d$.
 \end{theorem}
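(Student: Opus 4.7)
The plan is to adapt the ``freeze-and-compare'' strategy of \cite{FKM2015} to the present multidimensional Riesz-kernel setting. From the mild formulation
$$u_t(\x)=(G_t*u_0)(\x)+\int_0^t\!\!\!\int G_{t-s}(\x-\y)b(u_s(\y))\,d\y\,ds+\int_0^t\!\!\!\int G_{t-s}(\x-\y)\sigma(u_s(\y))\,F(ds,d\y),$$
where $G$ denotes the heat kernel of $-(-\Delta)^{\alpha/2}$, I decompose the spatial increment $u_t(\x)-u_t(\x-\e\boldsymbol e)=\cI_\e+\cD_\e+\cS_\e$ into the contributions of the initial data, the drift, and the noise. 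The Hölder hypothesis \eqref{eq u0 2} with $\eta_0>2H/\alpha\ge H$ yields $\|\cI_\e\|_{L^{k_0}}=O(\e^{\eta_0})=o(\e^H)$ uniformly in $\x$; the Lipschitz bound on $b$, the uniform moment bound \eqref{eq u moment}, and the $L^1$-estimate $\int_0^t\!\int|G_{t-s}(\x-\y)-G_{t-s}(\x-\e\boldsymbol e-\y)|\,d\y\,ds=O(\e)$ give $\|\cD_\e\|_{L^p}=o(\e^H)$ since $\alpha>1$ and $H<1$.

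The heart of the argument is the analysis of $\cS_\e$. Freezing $\sigma$ at $(t,\x)$,
$$\cS_\e=\sigma(u_t(\x))\,\Phi_\e(\x)+\cR_\e,\qquad \Phi_\e(\x):=\int_0^t\!\!\!\int\bigl[G_{t-s}(\x-\y)-G_{t-s}(\x-\e\boldsymbol e-\y)\bigr]F(ds,d\y),$$
isolates the pure Wiener integral $\Phi_\e(\x)$. Applying Plancherel with the spectral measure $\mu(d\xi)=\|\xi\|^{-\gamma}d\xi$ of the Riesz kernel and the Fourier symbol $\widehat{G_r}(\xi)=e^{-r\|\xi\|^\alpha}$ expresses
$$\Var(\Phi_\e(\x))=\int_{\mathbb R^d}(1-\cos(\xi\cdot\e\boldsymbol e))\|\xi\|^{-(\alpha+\gamma)}\bigl(1-e^{-2t\|\xi\|^\alpha}\bigr)\,d\xi;$$
the rescaling $\xi=\eta/\e$ with dominated convergence (using $1-e^{-2t\e^{-\alpha}\|\eta\|^\alpha}\to 1$) identifies the leading-order coefficient with the asymptotic variance of $c_{\alpha,\gamma,d}[B^H(\x)-B^H(\x-\e\boldsymbol e)]$.

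For the freezing remainder $\cR_\e$, the Lipschitz property of $\sigma$ bounds the integrand by $|u_s(\y)-u_t(\x)|$, which by the space-time Hölder estimates for $u$ (standard consequences of Dalang's theory, cf.\ \cite{ANTV2022}) has $L^p$-norm $\lesssim\|\y-\x\|^{H-\eta}+|t-s|^{(H-\eta)/\alpha}$ for any small $\eta>0$. Because the kernel difference $G_{t-s}(\x-\y)-G_{t-s}(\x-\e\boldsymbol e-\y)$ localizes the effective integration region to $\|\y-\x\|\lesssim\e$, $|t-s|\lesssim\e^\alpha$, this Hölder factor contributes a further $O(\e^{H-\eta})$; combined with the $O(\e^H)$ size of $\Phi_\e$, one expects $\|\cR_\e\|_{L^2}=O(\e^{2H-\eta})=o(\e^H)$, uniformly in $(\x,\boldsymbol e)$. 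Since $\sigma(u_t(\x))$ is only $\FF_t$-measurable the integrand is anticipative; the rigorous estimate is carried out either via Malliavin/Skorokhod calculus or by reducing to predictable integrands through Picard iterates.

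The last step is to realize $\Phi_\e(\x)$ as $c_{\alpha,\gamma,d}$ times the increment of a genuine isotropic fBm. Writing $\tilde F(ds,d\xi)$ for the spatial Fourier transform of $F$ (Gaussian with intensity $ds\,\mu(d\xi)$), the reweighted time integral $\sqrt{2}\,\|\xi\|^{(\alpha+\gamma)/2}\int_0^t e^{-(t-s)\|\xi\|^\alpha}\tilde F(ds,d\xi)$ is, as $\|\xi\|\to\infty$, asymptotically a spatial complex white noise; completing by an independent low-frequency white noise yields a full complex white noise $\tilde W$ on $\mathbb R^d$, and the spectral representation
$$B^H(\x):=C_H\int_{\mathbb R^d}\frac{e^{i\xi\cdot\x}-1}{\|\xi\|^{(\alpha+\gamma)/2}}\tilde W(d\xi),$$
with $C_H$ fixed by $\Var(B^H(\x)-B^H(\y))=\|\x-\y\|^{2H}$, defines an isotropic fBm with the required covariance \eqref{eq Isot fBM}. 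Matching Fourier integrands then shows that the discrepancy $\Phi_\e(\x)-c_{\alpha,\gamma,d}[B^H(\x)-B^H(\x-\e\boldsymbol e)]$ only involves the low-frequency augmentation, whose $L^2$-norm is $O(\e)=o(\e^H)$. Combining the four estimates, the numerator in \eqref{eq spatial gradient } equals $c_{\alpha,\gamma,d}\sigma(u_t(\x))[B^H(\x)-B^H(\x-\e\boldsymbol e)]+o_P(\e^H)$ while the denominator equals $\e^H N$ for a standard normal $N$; since $1/|N|$ is tight, the quotient converges to $c_{\alpha,\gamma,d}\sigma(u_t(\x))$ in probability, uniformly in $(\x,\boldsymbol e)$. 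The principal technical obstacle is the rigorous $L^2$-control of the anticipative remainder $\cR_\e$.
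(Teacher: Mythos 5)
Your architecture is essentially the paper's: decompose $u$ into initial-data, drift and noise parts, freeze $\sigma$ at $(t,\x)$, compare the linear noise term with an isotropic fBm via a spectral decomposition (the paper imports this as Lemma \ref{Lem  spatial} from \cite{KT2019}, which is exactly the ``high-frequency white noise plus smooth low-frequency correction'' picture you sketch), and conclude by dividing by the fBm increment, whose law is $\e^{H}$ times a standard normal. Your treatment of $\cI_\e$ and $\cD_\e$ is fine and in line with \eqref{eq u0 3} and Lemma \ref{Holder Y}.

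The genuine gap is the one you yourself flag: the remainder $\cR_\e$ is never controlled, and this is the entire technical content of the theorem. You cannot apply the Burkholder--Davis--Gundy bound \eqref{eq BDG} to an integrand containing $\sigma(u_t(\x))$, since it is not predictable on $(0,t)$, and ``Malliavin/Skorokhod calculus or Picard iterates'' is a different, nontrivial route that you do not carry out (a Skorokhod approach would produce derivative correction terms requiring their own uniform estimates). Moreover, your claimed bound $\|\cR_\e\|_{L^2}=O(\e^{2H-\eta})$ rests on an unquantified assertion that the kernel difference ``localizes'' the integral to $\|\y-\x\|\lesssim\e$, $|t-s|\lesssim\e^{\alpha}$; localizing to a box of that exact size only gives an $O(\e^{H})$ tail error, with no gain over the main term. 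The paper's Section 3 exists precisely to fix both problems simultaneously: Proposition \ref{prop local} and Lemma \ref{coro local appr2} quantify the localization error over the enlarged box $\mathbf B_{\beta}(\x,t;\e)=[t-\beta\e^{\alpha},t]\times\mathbf B(\x;\e\delta)$, $\delta=1+\beta^{1+1/H}$, as $c\,\e^{H}\beta^{-\frac{d+2-\alpha-\gamma}{2\alpha}}$, and Lemma \ref{lem local appr3} freezes the coefficient at $u_{t-\beta\e^{\alpha}}(\tilde\x)$, which is measurable with respect to the noise prior to the box and hence independent of the noise inside it, so the integrand stays predictable and \eqref{eq BDG} applies; the subsequent replacement of $u_{t-\beta\e^{\alpha}}(\tilde\x)$ by $u_t(\tilde\x)$ is handled outside the stochastic integral by the Cauchy--Schwarz inequality as in \eqref{eq Q2 1}. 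Only after optimizing $\beta=\e^{-l}$ with the specific small $l$ in \eqref{eq l} does one get the uniform bound $A\e^{H+\eta}$ of \eqref{eq q2} --- a small gain over $\e^{H}$, not $\e^{2H}$. Without this balancing of the localization error against the freezing error (or a fully executed anticipative-calculus substitute), the central step of your proof is asserted rather than proved.
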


       By using the arguments in the proofs of Corollaries 1.2, 1.3 and 1.5    in \cite{FKM2015},  we can obtain the following results.
            \begin{corollary}\label{cor  t LIL} Assume that Condition \ref{cond u0} holds.
    Choose and fix $t>0$, $\x\in \mathbb R^d$ and a  unit vector $\boldsymbol  e\in \mathbb R^d$. Then with probability one,
    \begin{equation}
    \begin{split}
    \limsup_{\e\downarrow 0}\frac{u_t(\x)-u_t(\x-\e\boldsymbol  e)}{\e^{H}\sqrt{2 \log\log(1/\e)}}= c_{\alpha, \gamma,d}|\sigma(u_t(\x))|.
    \end{split}
    \end{equation}
         \end{corollary}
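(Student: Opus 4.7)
The plan is to reduce the LIL for $\x\mapsto u_t(\x)$ to the classical LIL for a one-dimensional fractional Brownian motion via Theorem~\ref{thm spatial gradient}. First, a direct computation from the covariance \eqref{eq Isot fBM} shows that, for fixed $\x$ and unit vector $\boldsymbol e$, the process $s\mapsto B^H(\x)-B^H(\x-s\boldsymbol e)$, $s\ge 0$, is a centered Gaussian process with covariance
\begin{align*}
\EE\bigl[(B^H(\x)-B^H(\x-s\boldsymbol e))(B^H(\x)-B^H(\x-r\boldsymbol e))\bigr]=\tfrac{1}{2}\bigl(s^{2H}+r^{2H}-|s-r|^{2H}\bigr),
\end{align*}
i.e.\ a standard one-dimensional fBm of Hurst index $H$. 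The classical local LIL for fBm therefore yields, almost surely,
\begin{align*}
\limsup_{\e\downarrow 0}\frac{B^H(\x)-B^H(\x-\e\boldsymbol e)}{\e^H\sqrt{2\log\log(1/\e)}}=1.
\end{align*}

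Next, I would upgrade the convergence-in-probability statement of Theorem~\ref{thm spatial gradient} to an almost sure estimate for the remainder
\begin{align*}
R_\e:=\bigl[u_t(\x)-u_t(\x-\e\boldsymbol e)\bigr]-c_{\alpha,\gamma,d}\sigma(u_t(\x))\bigl[B^H(\x)-B^H(\x-\e\boldsymbol e)\bigr].
\end{align*}
From the proof of Theorem~\ref{thm spatial gradient}, rather than its statement, one expects an $L^p$ bound of the form $\|R_\e\|_{L^p(\Omega)}\le C_p\,\e^{H+\delta}$ for some $\delta>0$ and some $p\in[2,k_0]$ large enough. Chebyshev's inequality together with the Borel-Cantelli lemma along a geometric subsequence $\e_n=\rho^n$ then yields $|R_{\e_n}|=o(\e_n^H)$ a.s.; a short chaining argument on each block $[\e_{n+1},\e_n]$, controlled by the Gaussian modulus of continuity of $R_\e$, extends this to $R_\e=o(\e^H)$ a.s.\ in the continuous parameter, after letting $\rho\uparrow 1$.

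Combining the two pieces, the ratio $[u_t(\x)-u_t(\x-\e\boldsymbol e)]/[\e^H\sqrt{2\log\log(1/\e)}]$ has the same almost sure limsup as $c_{\alpha,\gamma,d}\sigma(u_t(\x))[B^H(\x)-B^H(\x-\e\boldsymbol e)]/[\e^H\sqrt{2\log\log(1/\e)}]$. By the symmetry of the fBm increments, this limsup equals $c_{\alpha,\gamma,d}|\sigma(u_t(\x))|$ a.s., which is the desired identity.

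The main obstacle is the second step: Theorem~\ref{thm spatial gradient} as stated gives only convergence in probability, whereas an LIL is a statement about a.s.\ trajectories, so one must reach into the proof of Theorem~\ref{thm spatial gradient} to extract a quantitative $L^p$ bound on $R_\e$ with a strictly super-critical exponent $\e^{H+\delta}$. Once that bound is secured, the Borel-Cantelli and chaining arguments are standard, mirroring the corresponding computation in \cite{FKM2015}.
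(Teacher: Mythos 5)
Your overall strategy is the same as the paper's: the paper proves Corollary \ref{cor  t LIL} by invoking the arguments of \cite{FKM2015} together with the quantitative estimate \eqref{eq q1} from the proof of Theorem \ref{thm spatial gradient} (an $L^k$ bound of order $\e^{H+\eta}$ on the remainder), exactly the "reach into the proof, not the statement" step you identify; your observation that the directional increment of the isotropic multiparameter fBm along a line is a one-dimensional fBm of index $H$, and the final symmetry argument giving $|\sigma(u_t(\x))|$, are also the intended ingredients.

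However, the step where you upgrade to an almost sure statement has a genuine flaw as written. First, $R_\e$ is not Gaussian (it contains the nonlinear solution $u$), so there is no "Gaussian modulus of continuity" to invoke; any chaining must go through $L^p$-moment (Kolmogorov-type) bounds, which requires $pH>1$ for some admissible $p\le k_0$. More seriously, with a geometric sequence $\e_n=\rho^n$ the block $[\e_{n+1},\e_n]$ has length $(1-\rho)\e_n$, comparable to $\e_n$ itself; the oscillation of $\x\mapsto u_t(\x)$ and of $B^H$ over a segment of length $(1-\rho)\e_n$ is of order $\e_n^H$ (indeed of order $\e_n^H\sqrt{\log(1/\e_n)}$ if one uses the true a.s.\ modulus of fBm), which is \emph{not} $o\bigl(\e_n^H\sqrt{\log\log(1/\e_n)}\bigr)$ for fixed $\rho$, and letting $\rho\uparrow 1$ afterwards does not repair a bound that already diverges as $n\to\infty$. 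The standard fix, which is what the argument of \cite{FKM2015} amounts to, is to exploit the supercritical bound $\|R_\e\|_{L^k(\Omega)}\le A\e^{H+\eta}$ of \eqref{eq q1} on a mesh whose spacing is a strictly higher power of $\e$ (e.g.\ a polynomially spaced sequence $\e_n=n^{-q}$ with $q$ large, or a fine mesh of spacing $\e^{1+\eta/H}$ inside each block), combine Chebyshev--Borel--Cantelli at the mesh points with a Kolmogorov chaining bound $\|R_a-R_b\|_{L^k(\Omega)}\lesssim |a-b|^{H}$ for the in-between values, and note that it suffices to show $R_\e=o\bigl(\e^H\sqrt{\log\log(1/\e)}\bigr)$ a.s.\ rather than $o(\e^H)$. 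With that replacement (and the moment condition $kH>1$ made explicit), your reduction to the classical local LIL for the one-dimensional fBm goes through as you describe.
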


        \begin{corollary} Assume that Condition \ref{cond u0} holds.
    Choose and fix $t>0$, $\x\in \mathbb R^d$ and a unit vector $\boldsymbol e\in \mathbb R^d$. Then for all $\theta\in \mathbb R$,
        \begin{equation}
        \lim_{\e\downarrow 0}\mathbb P\left(\frac{u_t(\x)-u_t(\x-\e\boldsymbol e)}{ \e^{H}}\le \theta\right)
        =\mathbb P\left(\sigma(u_t(\x))\times \mathcal N\le \frac{\theta}{c_{\alpha, \gamma,d}} \right),
       \end{equation}
        where $\mathcal N$ denotes a standard Gaussian random variable, independent of $u_t(\x)$.
                                      \end{corollary}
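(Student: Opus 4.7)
The plan is to reduce the statement, via Theorem \ref{thm spatial gradient}, to the limiting behaviour of a standard Gaussian increment of the isotropic multiparameter fBm $B^{H}$. By the covariance identity \eqref{eq Isot fBM},
$$
N_\e := \frac{B^{H}(\x)-B^{H}(\x-\e\boldsymbol e)}{\e^{H}}
$$
is exactly $\mathcal N(0,1)$ for every $\e>0$, since $\|\e\boldsymbol e\|^{2H}=\e^{2H}$. Writing
$$
\frac{u_t(\x)-u_t(\x-\e\boldsymbol e)}{\e^{H}} = \frac{u_t(\x)-u_t(\x-\e\boldsymbol e)}{B^{H}(\x)-B^{H}(\x-\e\boldsymbol e)}\cdot N_\e,
$$
Theorem \ref{thm spatial gradient} tells us that the first factor converges in probability to $c_{\alpha, \gamma,d}\sigma(u_t(\x))$, while $\{N_\e\}_{\e>0}$ is (trivially) tight. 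Since a sequence that is $o_{\mathbb P}(1)$ multiplied by a tight sequence is $o_{\mathbb P}(1)$, Slutsky's lemma yields
$$
\frac{u_t(\x)-u_t(\x-\e\boldsymbol e)}{\e^{H}} = c_{\alpha, \gamma,d}\,\sigma(u_t(\x))\, N_\e + o_{\mathbb P}(1)\quad\text{as } \e\downarrow 0.
$$

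With this identity in hand, it suffices to prove the joint convergence in distribution $(u_t(\x), N_\e) \Rightarrow (u_t(\x), \mathcal N)$ as $\e\downarrow 0$, where $\mathcal N \sim \mathcal N(0,1)$ is independent of $u_t(\x)$. Given such joint convergence, applying the continuous mapping theorem to the map $(y,n)\mapsto c_{\alpha, \gamma,d}\,\sigma(y)\, n$ (continuous by the Lipschitz continuity of $\sigma$) gives
$$
\frac{u_t(\x)-u_t(\x-\e\boldsymbol e)}{\e^{H}} \Rightarrow c_{\alpha, \gamma,d}\,\sigma(u_t(\x))\,\mathcal N,
$$
and evaluating the distribution function at the continuity point $\theta$ and dividing inside the probability by $c_{\alpha, \gamma,d}$ produces the claimed limit.

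The main obstacle is the asymptotic independence of $N_\e$ from $u_t(\x)$. I would extract it from the explicit construction of $B^{H}$ in the proof of Theorem \ref{thm spatial gradient}, where $B^{H}$ is realised as a Wiener-type stochastic integral against the driving noise $\dot F$. The increment $B^{H}(\x)-B^{H}(\x-\e\boldsymbol e)$ is then a Wiener integral against a kernel supported, essentially, at spatial scales of order $\e$, so in the limit $\e\downarrow 0$ it becomes orthogonal in the Cameron--Martin space of $\dot F$ to the kernel that produces $u_t(\x)$. Computing the joint characteristic function $\mathbb E\bigl[e^{i\xi_1 u_t(\x) + i\xi_2 N_\e}\bigr]$, decomposing the noise into the component producing $N_\e$ and its orthogonal complement, and passing to the limit, one obtains the factorisation $\mathbb E\bigl[e^{i\xi_1 u_t(\x)}\bigr]\cdot e^{-\xi_2^2/2}$, which is the joint characteristic function of the product law of $u_t(\x)$ and an independent $\mathcal N(0,1)$. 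This mirrors the argument in the proof of Corollary 1.5 of \cite{FKM2015}.
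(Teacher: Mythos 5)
Your reduction is sound as far as it goes: $N_\e$ is exactly standard normal by \eqref{eq Isot fBM}, Theorem \ref{thm spatial gradient} (applied at the fixed $\x$ and $\boldsymbol e$) gives $\e^{-H}\bigl(u_t(\x)-u_t(\x-\e\boldsymbol e)\bigr)=c_{\alpha,\gamma,d}\,\sigma(u_t(\x))N_\e+o_{\mathbb P}(1)$, and the whole corollary then rests on the joint convergence $(u_t(\x),N_\e)\Rightarrow(u_t(\x),\mathcal N)$ with an independent limit. That is precisely the step your sketch does not prove. The proposed factorisation of $\mathbb E\bigl[e^{i\xi_1 u_t(\x)+i\xi_2 N_\e}\bigr]$ by ``decomposing the noise into the component producing $N_\e$ and its orthogonal complement'' is an argument about jointly Gaussian vectors: $u_t(\x)$ solves a nonlinear equation, it is not a Wiener integral, there is no ``kernel that produces $u_t(\x)$'', and $u_t(\x)$ is not measurable with respect to the orthogonal complement of the kernel of $N_\e$, so orthogonality alone does not split the characteristic function. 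The supporting heuristic is also inaccurate: for $H\neq1/2$ the increment $B^H(\x)-B^H(\x-\e\boldsymbol e)$ of the L\'evy fBm is not generated by a kernel localized at scale $\e$; it is correlated with the whole field. What is true, and what you would actually need, is the weaker fact that the normalized kernel of $N_\e$ tends to $0$ weakly in the first chaos (its covariance with any fixed first-chaos element is $O(\e^{\min(H,1-H)})$ by \eqref{eq Isot fBM}), combined with an $L^2(\Omega)$-approximation of $u_t(\x)$ by cylinder functionals of finitely many Gaussian integrals (and of $u_0$), to which the joint-Gaussian factorisation does apply. Neither ingredient is in your argument, so the crucial independence is asserted rather than proved. (Incidentally, the relevant model is Corollary 1.3 of \cite{FKM2015}, not Corollary 1.5.)

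The paper itself omits the proof and refers to the arguments of \cite{FKM2015}; the route consonant with its machinery gets the independence for free from the time-whiteness of the noise rather than from fBm orthogonality. With $\beta=\e^{-l}$ as in the proof of Theorem \ref{thm spatial gradient}, Lemma \ref{coro local appr2} together with the estimate of $Q_1$ in the proof of Lemma \ref{lem local appr3} (taking $\tilde\x=\x$) shows that, up to an error of order $o(\e^{H})$ in $L^k(\Omega)$ (the initial-condition and drift increments being handled by \eqref{eq u0 3} and \eqref{eq regul Y}), $\nabla_{\e\boldsymbol e}u_t(\x)$ equals $\sigma\bigl(u_{t-\beta\e^{\alpha}}(\x)\bigr)\int_{\mathbf B_{\beta}(\x,t;\e)}\bigl(\nabla_{\e\boldsymbol e}G^{\alpha}_{t-s}\bigr)(\x,\y)\,M(ds,d\y)$. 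The stochastic integral is Gaussian with variance $c^2_{\alpha,\gamma,d}\e^{2H}(1+o(1))$ (Proposition \ref{prop local} plus Corollary \ref{coro Z spatial eq}) and is independent of the filtration at time $t-\beta\e^{\alpha}$, while $u_{t-\beta\e^{\alpha}}(\x)\to u_t(\x)$ in probability by Proposition \ref{prop Holder}; the product of a past-measurable factor and an independent Gaussian factor has exactly the product characteristic function you want, and the limit follows. Finally, passing from weak convergence to convergence of distribution functions needs $\theta$ to be a continuity point of the law of $c_{\alpha,\gamma,d}\sigma(u_t(\x))\mathcal N$; this is automatic unless $\mathbb P(\sigma(u_t(\x))=0)>0$, in which case $\theta=0$ requires a separate remark, a caveat you pass over by simply calling $\theta$ ``the continuity point''.
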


                   Following the one-parameter case, we define the $q$-variation  {over an interval $[A_1,A_2]$} of the $d$-dimensional random field $\{\xi(\x)\}_{\x\in \mathbb R^d}$ as the limit in probability, as $n\rightarrow\infty$, of the sequence
                   \begin{align}\label{eq variation}
                   V_{[A_1, A_2]}^{n, q}(\xi):=\sum_{i=0}^{n-1}\left|\xi({\x}_{i+1})-\xi({\x}_{i}) \right|^q,
                   \end{align}
               where ${\x}_i=\left(x_i^{(1)}, \cdots, x_i^{(d)} \right)$ with $x_i^{(j)}=A_1+\frac{i}{n}(A_2-A_1)$ for $i=0,1, \cdots, n$ and $j=1,\cdots, d$.
                   See, e.g., \cite{KT2019}.

                    For any $a\in \mathbb R$, denote $\a:=(a, \cdots, a)\in \mathbb R^d$.

        \begin{corollary}\label{prop variations}      Assume that Condition \ref{cond u0} holds.        If $\varphi:\mathbb R\rightarrow\mathbb R$ is Lipschitz continuous,  then for all   non random reals $A_2>A_1$ and   $t>0$,
        \begin{equation}\label{eq variations}
        \begin{split}
        &\lim_{n\rightarrow\infty} \sum_{A_1 2^n\le j \le A_2 2^n}\varphi(u_t({\x}_{j/2^n}))\left|u_t({\x}_{(j+1)/2^n})-u_t({\x}_{j/2^n}) \right|^{1/H}\\
        =& \,    c_{1,4}  \sqrt d   \int_{A_1}^{A_2}\varphi(u_t({\a})) \sigma(u_t({\a}))^{1/H}da,
        \end{split}
        \end{equation}
        almost surely and in $L^2(\Omega, \mathbb P)$, where $c_{1,4}=c^{1/H}_{\alpha, d, \gamma} \mathbb E|\mathcal N|^{1/H}$ with   $\mathcal N$ being a standard Gaussian random variable.         Particularly, taking $\varphi\equiv1$, we get the $1/H$-variation  of $\{u_t(\x)\}_{\x\in \mathbb R^d}$:        \begin{equation}\label{eq variations}
        \begin{split}
           \lim_{n\rightarrow\infty}V_{[A_1, A_2]}^{2^n, {1/H}}(u_t)
        =  \,    c_{1,4}    \sqrt d   \int_{A_1}^{A_2}\sigma(u_t({\a}))^{1/H}da,
        \end{split}
        \end{equation}
        almost surely and in $L^2(\Omega, \mathbb P)$.
        \end{corollary}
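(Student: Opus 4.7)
The plan is to apply Theorem \ref{thm spatial gradient} to replace each increment $u_t(\x_{(j+1)/2^n}) - u_t(\x_{j/2^n})$ in the variation sum by its Gaussian proxy $c_{\alpha,\gamma,d}\,\sigma(u_t(\x_{j/2^n}))\bigl(B^H(\x_{(j+1)/2^n}) - B^H(\x_{j/2^n})\bigr)$, and then exploit the explicit Gaussian structure of those proxies to extract the Riemann integral in the limit. Along the diagonal path $\x_{j/2^n} = (j/2^n,\ldots,j/2^n)$, every consecutive pair differs by $\sqrt d/2^n$ in the fixed unit direction $\boldsymbol{e} = (1,\ldots,1)/\sqrt d$, so by the isotropy of $B^H$ the increments $\Delta_j^{(n)} := B^H(\x_{(j+1)/2^n}) - B^H(\x_{j/2^n})$ are centered Gaussians with common variance $(\sqrt d/2^n)^{2H}$; in fact the joint law of $\{\Delta_j^{(n)}\}_j$ coincides with that of the increments of a one-parameter fBm of Hurst index $H$ sampled on a uniform grid of step $\sqrt d/2^n$.

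First, I would upgrade the convergence in probability in Theorem \ref{thm spatial gradient} to an $L^p$ bound on the random ratio, for some $p > 1/H$ large enough; this quantitative version should come out of the moment estimates underlying the proof of the theorem together with the uniform moment bound (\ref{eq u moment}). Using the Lipschitz property of $\varphi$ (and a standard truncation to handle $r \mapsto |r|^{1/H}$ on compact sets), I can then replace the sum $S_n := \sum_j \varphi(u_t(\x_{j/2^n}))\,|u_t(\x_{(j+1)/2^n})-u_t(\x_{j/2^n})|^{1/H}$ by
\[
T_n := c_{\alpha,\gamma,d}^{1/H}\sum_j \varphi(u_t(\x_{j/2^n}))\,|\sigma(u_t(\x_{j/2^n}))|^{1/H}\,|\Delta_j^{(n)}|^{1/H},
\]
with the replacement error tending to zero in $L^2$.

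Next, I write $|\Delta_j^{(n)}|^{1/H} = (\sqrt d/2^n)\,\mathbb{E}|\mathcal N|^{1/H} + R_j^{(n)}$. The deterministic piece turns $T_n$ into a Riemann sum of width $1/2^n$ that, by continuity of $\x \mapsto u_t(\x)$, converges almost surely to $c_{1,4}\sqrt d\int_{A_1}^{A_2}\varphi(u_t(\boldsymbol a))\,|\sigma(u_t(\boldsymbol a))|^{1/H}\,da$. The remaining fluctuation sum $\sum_j \varphi(u_t(\x_{j/2^n}))\,|\sigma(u_t(\x_{j/2^n}))|^{1/H}\,R_j^{(n)}$ must vanish in $L^2$: after a standard conditioning (or a direct expansion), its second moment reduces to a double sum of Gaussian covariances $\mathrm{Cov}\bigl(|\Delta_j^{(n)}|^{1/H},|\Delta_k^{(n)}|^{1/H}\bigr)$, which are of order $\rho_{j,k}^2$ in the normalized correlation $\rho_{j,k}$ of the increments; since these reproduce the increments of a classical one-parameter fBm of index $H$, one has $|\rho_{j,k}| = O(|j-k|^{2H-2})$, and the resulting Breuer--Major-type summation yields $o(1)$, exactly as in the argument of \cite{FKM2015}.

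The main obstacle is technical rather than conceptual: one must verify that the approximation in Theorem \ref{thm spatial gradient} is controlled not merely in probability but in a strong enough norm, uniformly in the base point and in the scale $\e = 2^{-n}$, so that the $\sim 2^n$ small errors do not add up after being raised to the power $1/H$. Once this quantitative refinement is in place, the rest is a Breuer--Major/Riemann-sum exercise along the diagonal, and the almost-sure mode of convergence along the dyadic sequence $n$ follows by a Borel--Cantelli argument from the $L^p$ rates, mirroring the one-dimensional treatment in \cite{FKM2015}.
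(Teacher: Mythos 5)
Your overall strategy is the one the paper intends: the paper omits the proof and refers to the arguments of \cite{FKM2015} combined with the estimates from the proof of Theorem \ref{thm spatial gradient}, and indeed the quantitative ingredient you say must be ``upgraded'' is already there --- inequality \eqref{eq q1} gives the uniform $L^k$ rate $\e^{H+\eta}$ for $k\in[2,k_0]$, so no upgrade from convergence in probability is needed. Your reduction along the diagonal (increments of the isotropic fBm along the direction $(1,\dots,1)/\sqrt d$ with step $\sqrt d/2^n$ are the increments of a one-parameter fBm of index $H$), the per-term replacement error of order $\e^{H+\eta}\cdot\e^{1-H}=\e^{1+\eta}$ summed over $\sim 2^n$ terms, and the Borel--Cantelli argument along dyadics are all consistent with the paper's (omitted) route.

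There is, however, a genuine gap in the treatment of the fluctuation term. You claim that the second moment of $\sum_j \varphi(u_t(\x_{j/2^n}))|\sigma(u_t(\x_{j/2^n}))|^{1/H}R_j^{(n)}$ ``reduces, after a standard conditioning (or a direct expansion), to a double sum of Gaussian covariances $\mathrm{Cov}\bigl(|\Delta_j^{(n)}|^{1/H},|\Delta_k^{(n)}|^{1/H}\bigr)$.'' This factorization requires the weights to be independent of (or at least suitably conditioned away from) the increments $\Delta_j^{(n)}$, and that fails here: $B^{H}$ is produced by the decomposition of $Z_t$ in Lemma \ref{Lem spatial}, hence is a functional of the same noise $F$ at the same time $t$ that drives $u_t$, and there is no filtration in the spatial variable making the weights predictable. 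So neither conditioning nor direct expansion yields the Breuer--Major covariance bound as stated. The standard repair --- and what the \cite{FKM2015}-type argument actually uses --- is pathwise: first prove almost sure (and $L^2$) convergence of the \emph{unweighted} normalized $1/H$-variation of the one-parameter fBm, uniformly over dyadic subintervals (Gaussian hypercontractivity plus Borel--Cantelli), and then handle the continuous random weight $\varphi(u_t)\sigma(u_t)^{1/H}$ by freezing it on small blocks and using continuity of $\x\mapsto u_t(\x)$; this needs no independence. A secondary point to keep in mind: Condition \ref{cond u0} only provides moments of $u_t$ up to order $k_0$, so the H\"older/truncation steps in which $|\sigma(u_t)|^{1/H}$ and the replacement errors are estimated in $L^2$ must be checked against the available moment order rather than assuming all moments, as one may when the initial data is bounded.
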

 {
         At the end of the section,   we   briefly introduce some works about   the approximation temporal gradient $u_{t+\e}(\x)-u_t(\x)$ at any fixed  $t > 0$ and $\x\in \mathbb R^d$, as $\e\downarrow 0$.  When $d=1$ and $F$ is the space-time white noise, Khoshnevisan et al. \cite{KSXZ2013}
        utilized an approximation approach to study the local and variational  properties of the temporal process
        $\{u_t(\x)\}_{t \ge 0}$, where $\x\in \mathbb R^d$ is fixed.
        Their key idea in \cite{KSXZ2013} is to show that, as $\e\downarrow 0$,
                $$
        u_{t+\e}(\x)-u_{t}(\x)\approx c_{1,5}\sigma(u_t(\x)) \left[B^{\widetilde H}(t+\e)-B^{\widetilde H}(t) \right] \  \ \text{in certain sense},
        $$
        where $c_{1,5}=\frac{1}{\pi(\alpha-1)}\Gamma\left(\frac{1}{\alpha}\right)^{\frac12}$,
        and $B^{\widetilde H}$ denotes an fBm with Hurst index $\widetilde H=(\alpha-1)/(2\alpha)\in (0,1/4]$.
        Das \cite{Das2022} further applied this approximation approach and studied the sample path properties of the temporal
        process of the Kardar-Parisi-Zhang equation with general initial data.      Recently,  Wang and Xiao \cite{WX2023} consider the stochastic fractional heat equation (\ref{eq SFBE}) with the time-white and space-colored   Gaussian noise and extend the approximation approach in \cite{KSXZ2013} to the more general setting.}

      The rest of this paper is organized as follows. In Section 2, we first  introduce the stochastic integral and   give some facts about the linear stochastic heat equation  taking from \cite{KT2019}, then we prove the H\"older continuity of the solution.  In Section 3, we give some estimates of the localization of the solution.   Section 4 is  devoted to the  proof of the main results in this paper.

          \section{Preliminaries}
  \subsection{Stochastic integral}
       We first define precisely the driving noise that appears in \eqref{eq SFBE}, which is borrowed  from \cite{DKN2013}.
   Let $\mathcal D(\mathbb R^d)$ be the space of $C^{\infty}$-test functions with compact support. Then $F=\{F(\phi), \, \phi\in \mathcal D(\mathbb R^{d+1})\}$ is an $L^2(\Omega, \mathcal F, \mathbb P)$-valued mean zero Gaussian process with covariance
\begin{align}\label{eq F1}
 \mathbb E\left[F(\phi)F(\psi) \right]=c_{1,1}\int_{\mathbb R_+}dr \int_{\mathbb R^d}d\y \int_{\mathbb R^d}d\z \,\phi(r,\y)\|\y-\z\|^{-(d-\gamma)}\psi(r,\z).
  \end{align}
 Using elementary properties of the Fourier transform (see \cite{Dalang1999}), this covariance can also be written as
\begin{equation}\label{eq F2}
  \mathbb E\left[F(\phi)F(\psi) \right]=   \int_{\mathbb R_+}d r\int_{\mathbb R^d}d\xi\, \|\xi\|^{-\gamma}\mathcal F\phi(r,\cdot)(\xi)\overline{\mathcal F\psi(r,\cdot)}(\xi),
  \end{equation}
  where   $\mathcal F f (\cdot)(\xi)$ denotes the Fourier transform of $f$, that is,
  $$
  \mathcal F f(\cdot) (\xi)=\int_{\mathbb R^d} e^{2\pi i \xi\cdot \x} f(\x)d\x.
  $$

  Following Walsh \cite{Walsh1986} and Dalang \cite{Dalang1999}, a rigorous formulation of  \eqref{eq SFBE} through the notion of mild solution as follows. Let $M=\{M_t(A), \, t\ge0, A\in \mathcal B_b(\mathbb R^d)\}$ be the worthy martingale measure obtained as an extension of the process $\dot F$ as in Dalang and Frangos  \cite{DF1998} (also see Dalang and Quer-Sardanyons \cite{DQ2011}).   Then a mild solution of \eqref{eq SFBE} is jointly measurable $\mathbb R$-valued process   $u=\{u(t,\x)\}_{t\ge0, \x\in \mathbb R^d}$,  adapted to the natural filtration generated by $M$, such that
  \begin{equation}\label{eq solution u}
\begin{split}
u_t(\x)=& \int_{\mathbb R^d}G^{\alpha}_{t}(\x, \y) u_0(\y)d\y+ \int_0^t\int_{\mathbb R^d}G^{\alpha}_{t-s}(\x, \y) b(u_s(\y))dsd\y\\
&\,+\int_0^t\int_{\mathbb R^d}G^{\alpha}_{t-s}(\x, \y) \sigma(u_s(\y)) M(ds,d\y),
\end{split}
\end{equation}
 where $G^{\alpha}_t(\x,\y):=G^{\alpha}_t(\x-\y)$ is the    Green kernel associated to the operator $-(-\Delta)^{\frac{\alpha}{2}}$ on $\mathbb R^d$, which is defined via its Fourier transform
 \begin{align}\label{eq heat kernel}
 (\mathcal FG^{\alpha}_t)(\cdot)(\xi)=e^{-t\|\xi\|^{\alpha}}, \ \ \ \xi\in \mathbb R^d,
 \end{align}
 for $\alpha\in (1,2]$, and  the stochastic integral is interpreted in the sense of Walsh \cite{Walsh1986}.
 We note that the covariance measure $M$ is
 $$
 Q([0,t]\times A\times B):=\langle M(A), M(B)\rangle_t=t\int_{\mathbb R^d}d\x \int_{\mathbb R^d}d\y\, 1_{A}(\x)\|\x-\y\|^{-(d-\gamma)}1_{B}(\y)
 $$
 and its dominating measure $K\equiv Q$.        In particular,  by  \eqref{eq F1},   \eqref{eq F2} and  \eqref{eq heat kernel}, we have
  \begin{equation}\label{eq kernel 1}
 \begin{split}
& \mathbb E\left[\left(\int_0^t\int_{\mathbb R^d}G^{\alpha}_{t-s}(\x, \y)  M(ds,d\y)\right)^2 \right]\\
 =&\, c_{1,1}\int_0^tds\int_{\mathbb R^d}d\y\int_{\mathbb R^d}d\z\, G^{\alpha}_{t-s}(\x, \y) \|\y-\z\|^{-(d-\gamma)} G^{\alpha}_{t-s}(\x, \z) \\
  =&\,  \int_0^tds\int_{\mathbb R^d}d\xi\, \|\xi\|^{-\gamma}\, |\mathcal F G^{\alpha}_{t-s}(\cdot)(\xi)|^2\\
  = &\,  \int_0^tds\int_{\mathbb R^d}d\xi\, \|\xi\|^{-\gamma}\, e^{-2(t-s)\|\xi\|^{\alpha}}\\
    =&\, c_{2,1}  \int_0^t s^{-(d-\gamma)/\alpha} ds,
     \end{split}
 \end{equation}
 where  $c_{2,1}=\int_{\mathbb R^d} \|\xi\|^{-\gamma}\, e^{-2 \|\xi\|^{\alpha}}d\xi<\infty$. The integral $ds$ in  the last term of \eqref{eq kernel 1} is finite if and only if $d-\gamma<\alpha$.

For any random variable $\zeta\in L^p(\Omega)$ with $p\ge1$, let $\|\zeta\|_{L^p(\Omega)}:=\left(\mathbb E |\zeta|^p\right)^{\frac{1}{p}}$.
 For any $\phi, \psi\in \mathcal D(\mathbb R^d)$, let
\begin{equation}\label{eq F3}
\begin{split}
 \langle \phi, \psi \rangle_{\HH}:=&\,   \int_{\mathbb R^d}d\xi\, \|\xi\|^{-\gamma}\mathcal F\phi(\cdot)(\xi)\overline{\mathcal F\psi(\cdot)}(\xi)\\
 =&\, c_{1,1} \int_{\mathbb R^d}d\y\int_{\mathbb R^d}   d\z\, \phi(\y) \|\y-\z\|^{-(d-\gamma)} \psi(\z).
 \end{split}
  \end{equation}
 Denote by $\HH$ the Hilbert space obtained by the completion of $\mathcal D(\mathbb R^d)$ with respect to the inner product $\langle \phi, \, \psi\rangle_{\mathcal H}$ defined by \eqref{eq F3}.

 According to the proof of Lemma 5.4 in \cite{FK2013} or Proposition 2.1 in \cite{SW2020}, we have the following Burkholder-Davis-Gundy inequality for the   stochastic convolution driven by the time-white and  space-colored noise.
 \begin{proposition}$($\cite[Lemma 5.4]{FK2013},   \cite[Proposition 2.1]{SW2020}$)$\label{prop BDG}
  Let $\{\sigma(t,\x)\}_{(t,\x)\in[0,T]\times \mathbb R^d}$ be a predictable random field  such that the following stochastic integral is well-defined.
    Then for any $t\in[0,T], \x\in \mathbb R^d$ and $p\geq 2$,
  \begin{equation}\label{eq BDG}
        \left\|  \int_0^t\int_{\mathbb R^d} G^{\alpha}_{t-s}(\x,\y)\sigma(s,\y)M(ds,d\y) \right\|_{L^p(\Omega)}^2
    \leq   4p \int_0^t \left\|G^{\alpha}_{t-s}(\x,\cdot) \|\sigma(s, \cdot)\|_{L^p(\Omega)}\right\|_{\HH}^2ds.
  \end{equation}
\end{proposition}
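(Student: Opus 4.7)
The plan is to reduce the statement to three classical ingredients: the Burkholder--Davis--Gundy (BDG) inequality with sharp constant for continuous martingales, Minkowski's integral inequality applied with exponent $p/2\ge 1$, and Cauchy--Schwarz. The whole point is to move the probabilistic $L^p(\Omega)$-norm through the Hilbert norm $\|\cdot\|_\HH$, which is possible only because the covariance kernel $\|\y-\z\|^{-(d-\gamma)}$ is pointwise non-negative.

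First, set $I_t:=\int_0^t\!\int_{\RR^d} G^{\alpha}_{t-s}(\x,\y)\sigma(s,\y)\,M(ds,d\y)$. From the covariance structure of the martingale measure $M$ (with dominating measure $K\equiv Q$) recorded before \eqref{eq kernel 1}, and the definition \eqref{eq F3} of $\langle\cdot,\cdot\rangle_{\HH}$, the predictable quadratic variation of the martingale $\tau\mapsto \int_0^\tau\int_{\RR^d}G^{\alpha}_{t-s}(\x,\y)\sigma(s,\y)M(ds,d\y)$ at time $\tau=t$ equals $\int_0^t \|G^{\alpha}_{t-s}(\x,\cdot)\sigma(s,\cdot)\|_{\HH}^2\,ds$. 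The BDG inequality for continuous martingales with the Carlen--Kree / Barlow--Yor constant $\sqrt{4p}$ then gives, for $p\ge 2$,
\[
\|I_t\|_{L^p(\Omega)}^2 \;\le\; 4p\,\bigl\|[I]_t\bigr\|_{L^{p/2}(\Omega)}
\;=\;4p\,\Bigl\|\int_0^t \|G^{\alpha}_{t-s}(\x,\cdot)\sigma(s,\cdot)\|_{\HH}^2\,ds\Bigr\|_{L^{p/2}(\Omega)}.
\]

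Next, apply Minkowski's integral inequality with exponent $p/2\ge 1$ to the $ds$-integral to obtain
\[
\bigl\|[I]_t\bigr\|_{L^{p/2}(\Omega)}\;\le\;\int_0^t \bigl\|\|G^{\alpha}_{t-s}(\x,\cdot)\sigma(s,\cdot)\|_{\HH}\bigr\|_{L^p(\Omega)}^2\,ds,
\]
using $\|Z^2\|_{L^{p/2}(\Omega)}=\|Z\|_{L^p(\Omega)}^2$. It now suffices to show, for each fixed $s$,
\[
\bigl\|\|G^{\alpha}_{t-s}(\x,\cdot)\sigma(s,\cdot)\|_{\HH}\bigr\|_{L^p(\Omega)}^2
\;\le\;\bigl\|G^{\alpha}_{t-s}(\x,\cdot)\,\|\sigma(s,\cdot)\|_{L^p(\Omega)}\bigr\|_{\HH}^2.
\]

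For this final step, expand the Hilbert norm by \eqref{eq F3} and use non-negativity of $G^{\alpha}_{t-s}(\x,\cdot)$ and $\|\y-\z\|^{-(d-\gamma)}$ together with $\sigma(s,\y)\sigma(s,\z)\le|\sigma(s,\y)||\sigma(s,\z)|$ to write
\[
\|G^{\alpha}_{t-s}(\x,\cdot)\sigma(s,\cdot)\|_{\HH}^2 \;\le\; c_{1,1}\!\int\!\!\int G^{\alpha}_{t-s}(\x,\y)|\sigma(s,\y)|\,\|\y-\z\|^{-(d-\gamma)}\,G^{\alpha}_{t-s}(\x,\z)|\sigma(s,\z)|\,d\y\,d\z.
\]
Taking $L^{p/2}(\Omega)$-norm and applying Minkowski's integral inequality once more to the non-negative double integral moves $\|\cdot\|_{L^{p/2}(\Omega)}$ inside $d\y\,d\z$; Cauchy--Schwarz then yields $\bigl\||\sigma(s,\y)\sigma(s,\z)|\bigr\|_{L^{p/2}(\Omega)}\le\|\sigma(s,\y)\|_{L^p(\Omega)}\|\sigma(s,\z)\|_{L^p(\Omega)}$, which upon re-identifying the integral with \eqref{eq F3} gives the desired bound.

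The main (though mild) obstacle is purely structural: moving $\|\cdot\|_{L^p(\Omega)}$ inside $\|\cdot\|_{\HH}$ would fail for a general indefinite Hilbert norm, and the argument relies crucially on representing $\|\cdot\|_\HH^2$ as an integral against the non-negative Riesz kernel. Once positivity is exploited, the calculation reduces to two applications of Minkowski's integral inequality and one application of Cauchy--Schwarz, chained with the BDG estimate for the stochastic-integral martingale.
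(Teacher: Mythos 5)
Your proposal is correct, and it is essentially the argument behind the result the paper invokes by citation: the paper itself offers no proof of Proposition \ref{prop BDG}, deferring to \cite[Lemma 5.4]{FK2013} and \cite[Proposition 2.1]{SW2020}, whose proofs run exactly as yours does --- the BDG inequality with constant $2\sqrt{p}$ applied to the continuous martingale $\tau\mapsto\int_0^\tau\int_{\mathbb R^d}G^{\alpha}_{t-s}(\x,\y)\sigma(s,\y)M(ds,d\y)$, followed by Minkowski's integral inequality at exponent $p/2$ and Cauchy--Schwarz, with the positivity of the Green kernel and of the Riesz kernel in \eqref{eq F3} doing the work of pushing $\|\cdot\|_{L^p(\Omega)}$ inside $\|\cdot\|_{\HH}$. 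Your remark that this last step hinges on the pointwise non-negativity of the covariance kernel is exactly the right thing to flag.
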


       \subsection{The fractional  Green kernel}
    Let us recall some useful properties of the kernel $G^{\alpha}_t(\x,\y)=G^{\alpha}_t(\x-\y)$  defined through \eqref{eq heat kernel}.   For details, we refer to \cite{ANTV2022, CZ2016}.  
    
    It is well known that   $G^{\alpha}_t(\cdot)$ is the probability transition density function of a rotationally invariant   $d$-dimensional  stable process $\{L_t^{(\alpha)}\}_{t\ge0}$.   By the scaling property of $L_t^{(\alpha)}\overset{d}{=}t^{1/\alpha} L_1^{(\alpha)}$, it is easy to see that
   \begin{equation}\label{eq scaling}
    G^{\alpha}_t(\x)=t^{-\frac{d}{\alpha}} G^{\alpha}_1\left(t^{-\frac{1}{\alpha}}\x\right) \ \ \ \ (t>0,\, \x\in \mathbb R^d).
  \end{equation}

  When $\alpha=2$, $L_t^{(\alpha)}$  is the standard Brownian motion, and
$$G^{\alpha}_t(\x,\y)=\frac{1}{(2\pi t)^{d/2}}\exp\left\{-\frac{\|\x-\y\|^2}{2t}\right\}.$$
  { For simplicity of presentation, we assume that $\alpha\in(1,2)$ throughout the rest of this paper. The proof in the case of $\alpha= 2$ is relatively simple, which is omitted here.}

 When $\alpha\in (1,2)$,  by \cite[Theorem 2.1]{BG1960}, there exist   some  finite constants  $0<K_{\alpha}'< K_{\alpha}<\infty$  such that for all $t>0,\x,\y\in \mathbb R^d$,
\begin{align}\label{eq Green 3}
K_{\alpha}'\frac{ t}{\left(t^{1/\alpha}+\|\x-\y\|\right)^{d+\alpha}} \le G^{\alpha}_t(\x,\y)\le K_{\alpha}\frac{ t}{\left(t^{1/\alpha}+\|\x-\y\|\right)^{d+\alpha}}.
   \end{align}
       By \cite[Lemma 2.2]{CZ2016}, one has that for every $T>0$, there exists a constant $c_{2,2}>0$ such that for all $0< t\le T$ and $\x, \y,\z\in \mathbb R^d$,
 \begin{align}\label{eq Green 5}
       \left| G^{\alpha}_t(\x,\y)-G^{\alpha}_t(\x,\z)\right|\le  c_{2,2} \left(  \frac{\|\y-\z\| }{t^{1/\alpha}} \wedge 1\right) \times \big(G^{\alpha}_t(\x,\y)+ G^{\alpha}_t(\x, \z)\big).
      \end{align}

     \subsection{Linearization of  the  stochastic heat equation}

 Let us consider the following linearization of the stochastic heat equation:
 \begin{align}\label{eq linear SHE}
 \frac{\partial}{\partial t}Z_t(\x)=-(-\Delta)^{\frac{\alpha}{2}}Z_t(\x)+\dot F(t,\x),
 \end{align}
subject to $Z_0(\x)=0$ for all $\x\in \mathbb R^d$.   The solution of \eqref{eq linear SHE} can be written as
\begin{align}\label{eq mild SHE}
  Z_t(\x)=\int_0^t\int_{\mathbb R^d} G^{\alpha}_{t-s}(\x, \y) M(ds,d\y),  \ \ \ \ t>0, \x\in \mathbb R^d.
     \end{align}
 It is well-known  that the mild solution \eqref{eq mild SHE} is well-defined if and only if $d<\alpha+\gamma$. Moreover, in this case, for every $T>0, p\ge 2$,
     $$
     \sup_{t\in [0,T],\x\in \mathbb R^d}\mathbb E\left[|Z_t(\x) |^p\right]<+\infty.
     $$
     See, e.g., Proposition 4.1 in \cite{KT2019} or Remark 5.4 in \cite{Balan12}.

     The following bounds are well-known; see, e.g., Propositions 3.2 and 3.3 in \cite{KS2022} or references therein.
                     \begin{lemma}\label{lem Z spatial}$($\cite[Propositions 3.2 and 3.3]{KS2022}$)$ Fix $T>0$ and $k\ge2$.
             \begin{itemize}
             \item[(a)]        There exists a constant $c_{2,3}>0$ such that
     \begin{align}\label{eq Z10}
      \left\|Z_t(\x)-Z_t(\x-\h ) \right\|_{L^k(\Omega)} \le  c_{2,3} |\h|^{ H},
          \end{align}
     uniformly for all     $t\in [0,T]$ and $\x, \h\in \mathbb R^d$.
                \item[(b)]        There exists a constant $c_{2,4}>0$ such that
     \begin{align}\label{eq Z20}
     \left\|Z_t(\x)-Z_s(\x)\right\|_{L^k(\Omega)} \le  c_{2,4} |t-s|^{ H/\alpha},
          \end{align}
     uniformly for all    $t,s\in [0,T]$ and $\x  \in \mathbb R^d$.
          \end{itemize}
           \end{lemma}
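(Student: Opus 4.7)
The plan is to apply Proposition~\ref{prop BDG} (Burkholder--Davis--Gundy) to each increment and reduce the $L^k(\Omega)$ bound to a deterministic $\HH$-norm computation, then pass to Fourier variables via \eqref{eq F3} and \eqref{eq heat kernel} where everything is explicit, and finally extract the correct Hölder exponent by rescaling. Because the integrand $G^\alpha_{t-r}$ is deterministic, the stochastic convolution is Gaussian and its $L^k$ and $L^2$ norms are equivalent up to a constant depending on $k$, so it suffices to control the second moment.

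For part (a), BDG yields
\[
\|Z_t(\x)-Z_t(\x-\h)\|_{L^k(\Omega)}^2 \le 4k\int_0^t \|G^\alpha_{t-s}(\x-\cdot)-G^\alpha_{t-s}(\x-\h-\cdot)\|_{\HH}^2\,ds.
\]
The modulus squared of the Fourier transform of the integrand equals $2(1-\cos(2\pi\xi\cdot\h))\,e^{-2(t-s)\|\xi\|^\alpha}$, so integrating in $s$ and using $(1-e^{-2t\|\xi\|^\alpha})/\|\xi\|^\alpha\le 1/\|\xi\|^\alpha$ reduces the bound to $\int_{\RR^d}\|\xi\|^{-\gamma-\alpha}(1-\cos(2\pi\xi\cdot\h))\,d\xi$. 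The substitution $\xi=\eta/|\h|$ factors out $|\h|^{\alpha+\gamma-d}=|\h|^{2H}$ and leaves a finite integral provided $d<\alpha+\gamma<d+2$, both of which are built into Condition~\ref{cond u0}(a) (the upper bound is automatic since $\alpha\le 2$ and $\gamma<d$).

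For part (b), I will split $Z_t(\x)-Z_s(\x)$ into the ``old-noise'' piece $\int_0^s\!\int(G^\alpha_{t-r}-G^\alpha_{s-r})\,dM$ and the ``new-noise'' piece $\int_s^t\!\int G^\alpha_{t-r}\,dM$. BDG and Fourier reduce the resulting bound to
\[
\int_{\RR^d}\|\xi\|^{-\gamma}(1-e^{-(t-s)\|\xi\|^\alpha})^2\frac{1-e^{-2s\|\xi\|^\alpha}}{2\|\xi\|^\alpha}d\xi \quad\text{and}\quad \int_{\RR^d}\|\xi\|^{-\gamma}\frac{1-e^{-2(t-s)\|\xi\|^\alpha}}{2\|\xi\|^\alpha}d\xi.
\]
The substitution $\xi=\eta/(t-s)^{1/\alpha}$ extracts the factor $(t-s)^{(\alpha+\gamma-d)/\alpha}=(t-s)^{2H/\alpha}$ from each, with the remaining integrals finite under the same integrability window.

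The only real obstacle is the bookkeeping in the Fourier computations and verifying the finiteness of the two rescaled integrals; once the exponents $d<\alpha+\gamma<d+2$ are checked (directly from Condition~\ref{cond u0}(a)), everything follows from BDG and the scaling of the stable heat semigroup. No new analytic estimate is required beyond what is already stated in the preliminaries.
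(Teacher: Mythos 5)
Your proof is correct and is essentially the standard argument behind the result: the paper does not prove this lemma itself but cites \cite[Propositions 3.2 and 3.3]{KS2022}, and the proof there (as in Dalang's theory) is exactly your reduction --- Gaussianity/BDG to pass from $L^k(\Omega)$ to the second moment, the Fourier isometry \eqref{eq F2} with $|\mathcal F G^{\alpha}_t(\cdot)(\xi)|^2=e^{-2t\|\xi\|^{\alpha}}$, and the scaling substitutions $\xi=\eta/\|\h\|$ resp.\ $\xi=\eta/(t-s)^{1/\alpha}$ extracting $\|\h\|^{2H}$ resp.\ $(t-s)^{2H/\alpha}$. The finiteness checks you invoke are right: Condition \ref{cond u0}(a) gives $\alpha+\gamma>d$, and $\alpha\le 2$, $\gamma<d$ give $\alpha+\gamma<d+2$ for part (a), while the two rescaled integrals in part (b) only need the even weaker small-frequency conditions $\gamma<d$ and $\gamma<d+\alpha$, so the argument closes with constants uniform in $t,s\in[0,T]$ and $\x,\h\in\mathbb R^d$.
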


       \begin{lemma}  {$($\cite[Proposition 4.6]{KT2019}$)$}\label{Lem  spatial}
 Fix $t>0$. Then the process $\{Z_t(\x)\}_{\x\in \mathbb R^d}$ defined by  \eqref{eq mild SHE} has the same finite-dimensional distribution as
 $$
 c_{\alpha, \gamma,d} B^{H}(\x)+S_t(\x)   \ \ \ \  (\x\in \mathbb R^d),
 $$
  where $B^{H}$ is an isotropic multiparameter fBm with Hurst index $H=(\alpha-d+\gamma)/2$, $c_{\alpha, \gamma,d}$ is the constant defined by \eqref{eq constant c},
  and   $\{S_t(\x)\}_{\x\in \mathbb R^d}$ is a Gaussian process with $C^{\infty}$-paths. Moreover, for any   $0<T_0<T_1<\infty$, there exists a constant $c_{2,5}:=c_{2,5}(T_0,T_1)>0$  such that
  $$
 \left\|S_t(\x)-S_t(\y)\right\|_{L^2(\Omega)}\le c_{2,5}\|\x-\y\|\ \ \ \ (\x, \y\in \mathbb R^d),
  $$
uniformly for  all  $t\in [T_0,T_1]$.
   \end{lemma}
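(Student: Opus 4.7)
The plan is to exploit a spectral (Fourier--Wiener) representation of $Z_t$ and extract from it an isotropic fBm by completing the time-integral from $0$ up to $-\infty$; what remains will be smooth because its spectral density enjoys exponential decay at infinity. Concretely, using Plancherel together with \eqref{eq F2}--\eqref{eq heat kernel}, the noise $M$ lifts to a complex Gaussian spectral measure $\widetilde M(ds,d\xi)$ with control measure $ds\otimes\|\xi\|^{-\gamma}d\xi$, so that (up to Fourier-normalisation constants that I will not track explicitly)
\begin{equation*}
Z_t(\x)\stackrel{d}{=}\int_0^t\int_{\RR^d}e^{-(t-s)\|\xi\|^\alpha}e^{-i\xi\cdot\x}\,\widetilde M(ds,d\xi),
\end{equation*}
and the isometry, combined with $\int_0^t e^{-2(t-s)\|\xi\|^\alpha}\,ds=(1-e^{-2t\|\xi\|^\alpha})/(2\|\xi\|^\alpha)$, yields
\begin{equation*}
\EE[(Z_t(\x)-Z_t(\y))^2]\;\propto\;\int_{\RR^d}\frac{1-e^{-2t\|\xi\|^\alpha}}{\|\xi\|^{\gamma+\alpha}}\bigl(1-\cos(\xi\cdot(\x-\y))\bigr)d\xi.
\end{equation*}

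Next I would extract the fBm by setting
\begin{equation*}
c_{\alpha,\gamma,d}B^H(\x):=\int_{-\infty}^t\int_{\RR^d}e^{-(t-s)\|\xi\|^\alpha}\bigl(e^{-i\xi\cdot\x}-1\bigr)\widetilde M(ds,d\xi);
\end{equation*}
the increment factor $e^{-i\xi\cdot\x}-1$ is essential because $\gamma+\alpha>d$ forces the raw extension to $(-\infty,t]$ to have infinite second moment at each point. Rotation invariance together with the change of variables $\xi\mapsto\|\x-\y\|^{-1}\xi$ (using $\alpha+\gamma-d=2H$) shows that this process is centred Gaussian, vanishes at the origin, and has variogram exactly $c^2_{\alpha,\gamma,d}\|\x-\y\|^{2H}$ (which is where the particular form \eqref{eq constant c} of $c_{\alpha,\gamma,d}$ enters); hence it coincides in distribution with $c_{\alpha,\gamma,d}B^H$.

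Define $S_t(\x):=Z_t(\x)-c_{\alpha,\gamma,d}B^H(\x)$. Spectrally this is the integral of $-e^{-(t-s)\|\xi\|^\alpha}(e^{-i\xi\cdot\x}-1)$ against $\widetilde M$ over $s\in(-\infty,0)$, plus the additive random constant $Z_t(\boldsymbol 0)$, and carrying out the $s$-integration gives
\begin{equation*}
\EE\bigl[(S_t(\x)-S_t(\y))^2\bigr]\;\propto\;\int_{\RR^d}\frac{e^{-2t\|\xi\|^\alpha}}{\|\xi\|^{\gamma+\alpha}}\bigl(1-\cos(\xi\cdot(\x-\y))\bigr)d\xi.
\end{equation*}
Using $1-\cos\theta\le\theta^2/2$ and the elementary inequality $d+2>\gamma+\alpha$ (which follows from $\gamma<d$ and $\alpha\le 2$), I would bound the right-hand side by
\begin{equation*}
\tfrac{1}{2}\|\x-\y\|^2\int_{\RR^d}\|\xi\|^{2-\gamma-\alpha}e^{-2T_0\|\xi\|^\alpha}d\xi,
\end{equation*}
uniformly for $t\in[T_0,T_1]$, which is the announced Lipschitz bound. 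Applying the same argument with $\|\xi\|^{2|\beta|}$ in place of $\|\xi\|^2$ gives finite variance for every formal derivative $\partial^\beta S_t$, and a Kolmogorov-type criterion then produces a $C^\infty$ modification.

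The main obstacle is keeping track of low-frequency integrability. The inequality $\gamma+\alpha>d$, which is exactly what makes \eqref{eq SFBE} spatially non-degenerate, simultaneously forces the natural spectral density $\|\xi\|^{-\gamma-\alpha}$ to be non-integrable at $\xi=0$: the extended process of the second step has no well-defined pointwise value, and every identification must be routed through increment forms $e^{-i\xi\cdot\x}-1$ that exploit the cancellation $1-\cos(\xi\cdot\h)=O(\|\xi\|^2)$ at the origin.
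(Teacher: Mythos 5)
The paper itself offers no proof of this lemma: it is imported verbatim from \cite[Proposition 4.6]{KT2019}, and the argument there (as in the related decompositions of \cite{LN2009,MT02,TX17}) is exactly the harmonizable decomposition you describe, so your route coincides with the source rather than with anything in this paper. Your reconstruction is essentially correct: extending the spectral noise to $s\in(-\infty,0)$ on an enlarged probability space is legitimate because only equality of finite-dimensional distributions is asserted; the increment kernel $e^{-i\xi\cdot\x}-1$ is indeed what cures the non-integrability of $\|\xi\|^{-\gamma-\alpha}$ at $\xi=0$; the scaling $\xi\mapsto\|\x-\y\|^{-1}\xi$ plus rotation invariance gives a stationary-increment Gaussian field vanishing at $\boldsymbol 0$ with variogram proportional to $\|\x-\y\|^{2H}$, hence an isotropic fBm; and the remainder has, up to the $\x$-independent random term $Z_t(\boldsymbol 0)$, spectral density $e^{-2t\|\xi\|^{\alpha}}\|\xi\|^{-\gamma-\alpha}$, so your Lipschitz bound holds uniformly for $t\ge T_0$ because $d+2>\alpha+\gamma$. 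Two points should be tightened to get the statement as written. First, the lemma pins down the constant to be exactly $c_{\alpha,\gamma,d}$ of \eqref{eq constant c}, so the Fourier normalisation must be tracked rather than waved away: with the paper's convention in \eqref{eq F2} and \eqref{eq heat kernel} the phase is $2\pi\xi\cdot\x$ and the isometry carries no $(2\pi)^{-d}$, so the change of variables must be done consistently with the convention under which \eqref{eq constant c} was derived in \cite{KT2019}; this is precisely where the factor $(2\pi)^{-d/2}$ comes from, and leaving it untracked leaves the identification of the constant incomplete. Second, finiteness of $\int_{\RR^d}\|\xi\|^{2|\beta|-\gamma-\alpha}e^{-2t\|\xi\|^{\alpha}}d\xi$ only gives mean-square derivatives of every order; to conclude $C^{\infty}$ sample paths you still need a Kolmogorov-type continuity estimate for each derivative field (available from the same spectral bound applied to increments of the differentiated kernel) together with the standard argument identifying these continuous versions as pathwise derivatives of a modification of $S_t$. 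Both are routine, but they are the parts of the cited proposition your sketch leaves implicit.
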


 {By using the idea of pinned string process of Mueller and Tribe \cite{MT02}, it can be shown (cf. \cite{TX17, KT2019, HSWX20} in increasing generality) that the following decomposition result holds.}
     \begin{lemma}\label{lem Z temp}  For any fixed $\x\in \mathbb R^d$,  the temporal process $\{Z_t(\x)\}_{t\ge0}$
 has the same  distribution  as
\begin{equation}\label{eq decom}
c_{\alpha, \gamma,d} B^{H/\alpha}(t)+S(t)   \ \ \ \  (t\in \mathbb R_+ := [0, \infty)),
\end{equation}
where $B^{H/\alpha}$ is an  fBm with Hurst index $H/\alpha=(\alpha-d+\gamma)/{(2\alpha)}$,
   \begin{align}\label{eq c24}
   c_{2,6}=\left[  \frac{\alpha}{ (2\pi)^{d}2^{(d-\gamma)/\alpha}(\alpha-d+\gamma)}\int_{\mathbb R^d}
   \,\|\xi\|^{-\gamma}e^{-\|\xi\|^{\alpha}}d\xi \right]^{1/2},
    \end{align}
 and $\{S(t)\}_{t\in \mathbb R_+}$ is a Gaussian process with $C^{\infty}(0, \infty)$-paths.  Moreover,  for any
 $0<T_1<T_2<\infty$, there exists a constant $c_{2,7}:=c_{2,7}(T_0,T_1)>0$  such that
  \begin{align}\label{eq S}
  \left\|S(t)-S(s)\right\|_{L^2(\Omega)} \le c_{2,7}|t-s|,  \ \ \text{for   } s, t\in [T_0,T_1].
  \end{align}
          \end{lemma}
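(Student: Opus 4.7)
The plan is to follow the pinned-string construction of Mueller--Tribe \cite{MT02} at the level of generality of \cite{TX17, KT2019, HSWX20}. I first extend the worthy martingale measure $M$ from $\RR_+\times\RR^d$ to a worthy martingale measure $\tilde M$ on $\RR\times\RR^d$ with the same spatial Riesz covariance, by attaching an independent copy on $(-\infty,0)\times\RR^d$; this $\tilde M$ is stationary under time translations. With $\x\in\RR^d$ fixed, I introduce
\[
U(t):=\int_0^t\!\!\int_{\RR^d}G^{\alpha}_{t-r}(\x,\y)\,M(dr,d\y)+\int_{-\infty}^0\!\!\int_{\RR^d}\bigl[G^{\alpha}_{t-r}(\x,\y)-G^{\alpha}_{-r}(\x,\y)\bigr]\tilde M(dr,d\y),
\]
the rigorous realization of the formally divergent expression ``$\int_{-\infty}^t G^{\alpha}_{t-r}\,d\tilde M-\int_{-\infty}^0 G^{\alpha}_{-r}\,d\tilde M$''. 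Using the Fourier identity in \eqref{eq kernel 1}, the increment split $U(t)-U(s)=\int_s^tG^{\alpha}_{t-r}\,dM+\int_{-\infty}^s[G^{\alpha}_{t-r}-G^{\alpha}_{s-r}]\,d\tilde M$, and the scaling $\eta=|t-s|^{1/\alpha}\xi$, I will compute $\mathrm{Var}(U(t)-U(s))=c_{2,6}^{2}|t-s|^{2H/\alpha}$. Stationarity of the increments then follows from the translation $r\mapsto r+s$ and time-stationarity of $\tilde M$. Being centered Gaussian with $U(0)=0$ and stationary Gaussian increments of the above variance, $U/c_{2,6}$ is in law an fBm with Hurst index $H/\alpha$.

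Setting $S(t):=\int_{-\infty}^0\!\int_{\RR^d}\bigl[G^{\alpha}_{-r}(\x,\y)-G^{\alpha}_{t-r}(\x,\y)\bigr]\tilde M(dr,d\y)$ yields the decomposition $Z_t(\x)=U(t)+S(t)$, which in distribution is $c_{2,6}B^{H/\alpha}(t)+S(t)$. For $r<0$ and $t>0$, $G^{\alpha}_{t-r}(\x,\y)$ is $C^\infty$ in $t$ with spatial Fourier symbol $e^{-(t-r)\|\xi\|^{\alpha}}$; differentiating under the stochastic integral and using \eqref{eq F2} gives
\[
\mathrm{Var}\bigl(\partial_t^{k}S(t)\bigr)=\tfrac12\int_{\RR^d}\|\xi\|^{2k\alpha-\gamma-\alpha}\,e^{-2t\|\xi\|^{\alpha}}\,d\xi<\infty\qquad(t>0,\ k\ge 0),
\]
finite because $\gamma<d$ ensures integrability at the origin and the exponential handles infinity; this yields the $C^\infty(0,\infty)$ path regularity of $S$. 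For \eqref{eq S}, the inequality $|e^{-t\|\xi\|^{\alpha}}-e^{-s\|\xi\|^{\alpha}}|\le|t-s|\,\|\xi\|^{\alpha}e^{-(t\wedge s)\|\xi\|^{\alpha}}$ applied to $\tfrac12\int\|\xi\|^{-(\gamma+\alpha)}|e^{-t\|\xi\|^{\alpha}}-e^{-s\|\xi\|^{\alpha}}|^{2}\,d\xi$ yields $\|S(t)-S(s)\|_{L^{2}(\Omega)}\le c_{2,7}(T_0,T_1)\,|t-s|$ uniformly for $t,s\in[T_0,T_1]$.

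The main obstacle is giving $U$ rigorous meaning: each of $\int_{-\infty}^t G^{\alpha}_{t-r}\,d\tilde M$ and $\int_{-\infty}^0 G^{\alpha}_{-r}\,d\tilde M$ has infinite $L^2(\Omega)$-norm (the spectral integral diverges at the origin because $\gamma+\alpha>d$), and only the cancelled combination $G^{\alpha}_{t-r}-G^{\alpha}_{-r}$ gives a finite-variance integrand. I plan to handle this by truncating the lower limit to $-N$, verifying that the resulting approximants $U_N(t)$ form a Cauchy sequence in $L^2(\Omega)$ uniformly on compact $t$-intervals (the tail past $-N$ contributes only through the cancelled kernel, whose variance is controlled by the finite quantity $\int\|\xi\|^{-(\gamma+\alpha)}(1-e^{-t\|\xi\|^{\alpha}})^{2}\,d\xi$), and defining $U$ as the $L^2$-limit. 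Once this is in place, all remaining steps reduce to explicit Fourier-side computations via \eqref{eq F2}.
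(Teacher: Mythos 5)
Your proposal is correct and follows essentially the paper's own route: the paper offers no proof of this lemma, simply attributing it to the pinned-string construction of Mueller and Tribe as developed in \cite{TX17, KT2019, HSWX20}, and that is exactly the construction you carry out (noise extended to negative times, the fractional-Brownian part $U$, the smooth remainder $S$, and the Fourier/scaling computation of the increment variance). One minor slip worth fixing: your displayed formula for $\mathrm{Var}\bigl(\partial_t^{k}S(t)\bigr)$ diverges at $k=0$ (near the origin the exponent is $-\gamma-\alpha<-d$ since $\gamma+\alpha>d$), so for $k=0$ you must keep the cancellation factor $\bigl(1-e^{-t\|\xi\|^{\alpha}}\bigr)^{2}$ as in your definition of $S$; the cases $k\ge 1$, which are all that the $C^{\infty}(0,\infty)$ claim and the bound \eqref{eq S} require, are fine, and the precise value of the constant $c_{2,6}$ should be verified against the normalization convention used in \eqref{eq kernel 1}.
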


           By    the properties of the (isotropic multiparameter) fBm and Lemmas  \ref{Lem  spatial}  and \ref{lem Z temp} , it is easily to obtain  the following results, which will be   very important in our later needs.
            \begin{corollary}\label{coro Z spatial eq}
             \begin{itemize}
             \item[(a)]      Fix $T_1>T_0>0$.     Then,
     \begin{align}\label{eq Z4}
    \left\|Z_t(\x)-Z_t(\x-\e \boldsymbol  e )\right\|_{L^2(\Omega)} = c_{\alpha, \gamma, d} \e^{ H} +O\left(\e \right) \ \ \ \ (\varepsilon \downarrow0),
     \end{align}
     uniformly for  every $t\in [T_0,T_1]$,  $\x\in \mathbb R^d$   and every unit vector $\boldsymbol e$ in $\mathbb R$, where $ c_{\alpha, \gamma, d}$ is defined by  \eqref{eq constant c}.
     \item[(b)]
     Fix $T_1>T_0>0$. Then
     \begin{align}\label{eq Z1}
    \left\|Z_t(\x)-Z_s(\x )\right\| = c_{2,6} |t-s|^{H/\alpha} + {O\left(|t-s|\right)\ \ \ \ (|t-s|\downarrow0)},
     \end{align}
     uniformly for every $t,s\in [T_0,T_1]$ and  $\x\in \mathbb R^d$, $ c_{2,6}$ is defined by  \eqref{eq c24}.
     \end{itemize}
           \end{corollary}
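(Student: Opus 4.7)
The plan is to deduce both estimates directly from the decompositions in Lemmas \ref{Lem  spatial} and \ref{lem Z temp}, combined with the exact second-moment structure of the (isotropic multiparameter) fractional Brownian motion and the Lipschitz-in-$L^2$ bound on the smooth remainder. Since the equalities in distribution supplied by those lemmas concern finite-dimensional distributions, and $\|Z_t(\x)-Z_t(\y)\|_{L^2(\Omega)}$ depends only on the bivariate distribution of $(Z_t(\x),Z_t(\y))$, transferring the computation to the fBm-plus-remainder representation is legitimate.

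For part (a), Lemma \ref{Lem  spatial} gives, at the level of two-point distributions,
\begin{equation*}
Z_t(\x)-Z_t(\x-\e\boldsymbol e) \stackrel{d}{=} c_{\alpha,\gamma,d}\bigl(B^{H}(\x)-B^{H}(\x-\e\boldsymbol e)\bigr) + \bigl(S_t(\x)-S_t(\x-\e\boldsymbol e)\bigr).
\end{equation*}
From the covariance formula \eqref{eq Isot fBM} one reads off $\|B^{H}(\x)-B^{H}(\x-\e\boldsymbol e)\|_{L^2(\Omega)}=\|\e\boldsymbol e\|^{H}=\e^{H}$, while the Lipschitz bound on $S_t$ in Lemma \ref{Lem  spatial} yields $\|S_t(\x)-S_t(\x-\e\boldsymbol e)\|_{L^2(\Omega)}\le c_{2,5}\e$, uniformly in $t\in[T_0,T_1]$, $\x\in\mathbb R^d$, and unit vector $\boldsymbol e$. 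The reverse triangle inequality $\bigl|\|A+B\|-\|A\|\bigr|\le\|B\|$ in $L^2(\Omega)$ then gives
\begin{equation*}
\bigl|\,\|Z_t(\x)-Z_t(\x-\e\boldsymbol e)\|_{L^2(\Omega)} - c_{\alpha,\gamma,d}\e^{H}\,\bigr| \le c_{2,5}\e,
\end{equation*}
which is exactly \eqref{eq Z4}, with uniformity inherited from the stated uniformities in Lemma \ref{Lem  spatial}.

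Part (b) is handled identically by substituting the temporal decomposition from Lemma \ref{lem Z temp}: $Z_t(\x)-Z_s(\x)\stackrel{d}{=} c_{2,6}(B^{H/\alpha}(t)-B^{H/\alpha}(s)) + (S(t)-S(s))$, the $L^2$-norm of the fBm increment is exactly $|t-s|^{H/\alpha}$, and the bound \eqref{eq S} contributes an $O(|t-s|)$ error, yielding \eqref{eq Z1}. There is no substantive obstacle in this proof; the only thing to be careful about is that since $H<1$ and $H/\alpha<1$, the principal terms $\e^{H}$ and $|t-s|^{H/\alpha}$ genuinely dominate the linear remainders as $\e\downarrow 0$ and $|t-s|\downarrow 0$, so the ``$O$''-notation is meaningful. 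Uniformity in $\x$ and in the unit vector $\boldsymbol e$ is automatic: the fBm increment norm depends on the increments only through their length by isotropy, and the Lipschitz estimates on $S_t$ and $S$ are uniform over the stated ranges.
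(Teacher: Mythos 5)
Your proposal is correct and is essentially the argument the paper intends: the paper itself derives Corollary \ref{coro Z spatial eq} directly from the decompositions in Lemmas \ref{Lem  spatial} and \ref{lem Z temp} together with the exact fBm increment variance and the Lipschitz $L^2$ bounds on the smooth remainders, exactly as you do via the reverse triangle inequality. Your explicit remark that the $L^2$ increment norm depends only on two-point distributions (so the equality of finite-dimensional distributions suffices) is a useful clarification of a step the paper leaves implicit.
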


 \subsection{H\"older continuity of the solution}

  \begin{proposition}\label{prop Holder}
 Under Condition \ref{cond u0},   for any $k\in [2,k_0]$ and $T>0$, there exists a constant $c_{2,8}>0$ such that
    \begin{align}\label{eq u xt}
    \left\|u_{t+\e}(\x)-u_{t}(\x+\h)\right\|_{L^k(\Omega)}   \le c_{2,8}\left(\e^{H/\alpha} +\|\h\|^{H}\right),
    \end{align}
    uniformly for all $t\in[0,T], \e\in (0,1),  \x, \h\in \mathbb R^d$.
       \end{proposition}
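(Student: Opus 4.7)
The plan is to split the joint increment by the triangle inequality,
\[\|u_{t+\e}(\x) - u_t(\x+\h)\|_{L^k(\Omega)} \le \|u_{t+\e}(\x) - u_t(\x)\|_{L^k(\Omega)} + \|u_t(\x+\h) - u_t(\x)\|_{L^k(\Omega)},\]
and bound the two pieces by $c\,\e^{H/\alpha}$ and $c\,\|\h\|^H$ respectively. In view of \eqref{eq u moment} only the regime $\e,\|\h\|\le 1$ needs attention; otherwise the bound is trivial. In each case I would decompose $u$ from the mild formulation \eqref{eq solution u} into (i) the initial-datum convolution, (ii) the drift integral, and (iii) the stochastic integral, and estimate them separately in $L^k(\Omega)$.

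For the spatial increment, piece (i) is controlled by Condition \ref{cond u0}(b) after a change of variables, yielding $c\|\h\|^{\eta_0}\le c\|\h\|^H$ since $\eta_0 > (\alpha-d+\gamma)/\alpha \ge H$ (using $\alpha\le 2$) and $\|\h\|\le 1$. Piece (ii) is handled by Minkowski, the Lipschitz property of $b$, and \eqref{eq u moment}, reducing matters to $\int_0^t\!\int_{\mathbb R^d}|G^\alpha_{t-s}(\x+\h,\y)-G^\alpha_{t-s}(\x,\y)|\,d\y\,ds$; the pointwise estimate \eqref{eq Green 5} combined with $\int G^\alpha_r = 1$ and a split of the $s$-integral at $r=\|\h\|^\alpha$ bounds this by $c\|\h\|^\alpha \le c\|\h\|^H$ since $\alpha > H$. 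For piece (iii), Proposition \ref{prop BDG} and the uniform bound $\|\sigma(u_s(\y))\|_{L^k(\Omega)}\le C$ (from the Lipschitz hypothesis and \eqref{eq u moment}) reduce matters to the $\HH$-norm of a product. Using positivity of the Riesz kernel in \eqref{eq F3}, this is dominated by $C^2\,\||G^\alpha_{t-s}(\x+\h,\cdot)-G^\alpha_{t-s}(\x,\cdot)|\|_{\HH}^2$; applying \eqref{eq Green 5} pointwise bounds the signed kernel by $c\min(\|\h\|/(t-s)^{1/\alpha},1)\bigl(G^\alpha_{t-s}(\x+\h,\cdot)+G^\alpha_{t-s}(\x,\cdot)\bigr)$, and a direct Fourier computation via \eqref{eq heat kernel} gives $\|G^\alpha_r(\x,\cdot)\|_{\HH}^2 \le C\,r^{-(d-\gamma)/\alpha}$. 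Integration in $s$ with a split at $r=\|\h\|^\alpha$ then produces $c\|\h\|^{2H}$, yielding $c\|\h\|^H$ in $L^k(\Omega)$.

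For the temporal increment, I would split the time integrals in (ii)--(iii) into contributions over $[t,t+\e]$ and over $[0,t]$. For the $[t,t+\e]$ stochastic contribution, Proposition \ref{prop BDG} and the Fourier identity yield $\int_0^\e\|G^\alpha_r(\x,\cdot)\|_{\HH}^2\,dr = c\int_{\mathbb R^d}\|\xi\|^{-\gamma}\bigl(1-e^{-2\e\|\xi\|^\alpha}\bigr)/(2\|\xi\|^\alpha)\,d\xi$, and the scaling $\xi\mapsto\xi/\e^{1/\alpha}$ produces $c\,\e^{(\alpha+\gamma-d)/\alpha}=c\,\e^{2H/\alpha}$. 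For the $[0,t]$ contribution, the semigroup identity $G^\alpha_{t+\e-s} = G^\alpha_\e * G^\alpha_{t-s}$ combined with \eqref{eq Green 5} and $\int G^\alpha_\e = 1$ gives the pointwise bound
\[|G^\alpha_{t+\e-s}(\x,\y)-G^\alpha_{t-s}(\x,\y)| \le c\min\!\Bigl(\tfrac{\e^{1/\alpha}}{(t-s)^{1/\alpha}},1\Bigr)\bigl(G^\alpha_{t+\e-s}(\x,\y)+G^\alpha_{t-s}(\x,\y)\bigr),\]
and an argument identical to the spatial case (positivity of the Riesz kernel plus the Fourier estimate on $\|G^\alpha_r(\x,\cdot)\|_{\HH}^2$) gives $c\,\e^{2H/\alpha}$ after splitting the time integral at $r=\e$. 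The initial-datum and drift pieces for the temporal increment are routine.

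The main technical obstacle is that the random factor $\|\sigma(u_s(\cdot))\|_{L^k(\Omega)}$ appears multiplicatively inside the $\HH$-norm in Proposition \ref{prop BDG} and cannot be pulled out of that norm directly, while the kernel differences $G^\alpha_{t-s}(\x+\h,\cdot)-G^\alpha_{t-s}(\x,\cdot)$ and $G^\alpha_{t+\e-s}(\x,\cdot)-G^\alpha_{t-s}(\x,\cdot)$ are signed. The device that resolves both cases simultaneously is positivity of the Riesz kernel in \eqref{eq F3}: it lets us dominate the signed integrand by its modulus (absorbing the bound on $\|\sigma\|_{L^k}$ into the constant $C^2$), and then the pointwise estimate \eqref{eq Green 5}, used directly in the spatial case and after one application of the semigroup identity in the temporal case, bounds the modulus by a small prefactor times a sum of positive Green kernels whose $\HH$-norm is controlled by the direct Fourier computation in \eqref{eq kernel 1}.
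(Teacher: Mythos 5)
Your proposal is correct in substance, but it takes a genuinely different route from the paper for the core (stochastic and drift) terms. The paper moves the spatial increment onto the solution itself: by stationarity of the kernel it rewrites the increment of $X$ as an integral of $G^{\alpha}_{t-s}(\x,\cdot)\,[\sigma(u_s(\cdot-\h))-\sigma(u_s(\cdot))]$ as in \eqref{eq Holder X x}, does the same for $Y$ in \eqref{eq Holder Y x}, and then closes the estimate self-referentially through $\Lambda_{\h}(t)=\sup_{\w}\|u(t,\w+\h)-u(t,\w)\|_{L^k(\Omega)}^2$ and the fractional Gr\"onwall lemma, obtaining \eqref{eq Holder x}; the temporal bound \eqref{eq Holder t} is then only sketched (``similarly'', citing \cite{KS2022} and Lemma \ref{lem Z spatial}(b)). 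You instead keep $\sigma(u_s(\y))$ in place, freeze it via the uniform moment bound \eqref{eq u moment} (so $\|\sigma(u_s(\y))\|_{L^k(\Omega)}\le C$), and put all increments on the Green kernel, controlled by \eqref{eq Green 5}, the semigroup identity, positivity of the Riesz kernel in \eqref{eq F3} (which legitimizes inserting absolute values inside the $\HH$-norm coming from Proposition \ref{prop BDG}), and the Fourier/scaling computation behind \eqref{eq kernel 1}, with the usual split of the time integral at $\|\h\|^{\alpha}$ resp.\ $\e$. This avoids Gr\"onwall entirely and has the advantage of treating the temporal increment by exactly the same mechanism, making that half self-contained where the paper defers to references; the paper's Gr\"onwall route, on the other hand, does not need any quantitative kernel-increment estimate beyond \eqref{eq Green 5} and would survive even without invoking \eqref{eq u moment} for the increments. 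Two small imprecisions in your write-up, neither fatal: (i) the spatial drift term comes out as $c(\|\h\|^{\alpha}+\|\h\|)\le c\|\h\|$ rather than $c\|\h\|^{\alpha}$ (the region $r\ge\|\h\|^{\alpha}$ contributes $c\|\h\|\,T^{1-1/\alpha}$), which is still $\le c\|\h\|^{H}$ for $\|\h\|\le1$; (ii) the pointwise temporal bound $|G^{\alpha}_{t+\e-s}-G^{\alpha}_{t-s}|\le c\min\bigl(\e^{1/\alpha}(t-s)^{-1/\alpha},1\bigr)\bigl(G^{\alpha}_{t+\e-s}+G^{\alpha}_{t-s}\bigr)$ does not follow in the stated form from the semigroup identity plus \eqref{eq Green 5} alone: that argument yields a convolution-type bound $\int G^{\alpha}_{\e}(\x,\z)\min\bigl(\|\z-\x\|(t-s)^{-1/\alpha},1\bigr)\bigl(G^{\alpha}_{t-s}(\z,\y)+G^{\alpha}_{t-s}(\x,\y)\bigr)d\z$, and you must either average out the stable increment (using $\mathbb E\|L^{(\alpha)}_1\|<\infty$ for $\alpha>1$, together with Minkowski's integral inequality when this is fed into the $\HH$-norm) or invoke the time-derivative heat-kernel estimate of \cite{CZ2016}, which gives the even stronger factor $\min(\e/(t-s),1)$; with either repair your $\e^{2H/\alpha}$ conclusion stands.
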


Next, we   decompose the solution $u$   in the following three terms:
\begin{equation}\label{eq decomposition}
u_t(\x)=\xi_t(\x)+X_t(\x)+Y_t(\x),
\end{equation}
  where
     \begin{align}
  \xi_t(\x):=&\, \int_{\mathbb R^d}G_{t}^{\alpha}(\x, \y)u_0(\y) d\y;\label{eq X0}\\
  X_t(\x):=&\,\int_0^t\int_{\mathbb R^d}G_{t-s}^{\alpha}(\x,\y)\sigma(u_s(\y)) M(ds,d\y);\label{eq X}\\
  Y_t(\x):= &\, \int_0^t\int_{\mathbb R^d}  G_{t-s}^{\alpha}(\x, \y) b(u_s(\y))   dsd\y.   \label{eq Y}
    \end{align}

        \begin{proof}[Proof of Proposition \ref{prop Holder}]
        By \eqref{eq u moment}, we only need to prove \eqref{eq u xt} for any $t\in[0,T], \e\in (0,1),  \x, \h\in \mathbb R^d$ with $\|\h\|\le 1$.
        
    For the first term $\{\xi_t(\x)\}_{t\in [0,T],\, \x\in\mathbb R^d}$,   by \cite[Proposition 2.6]{KS2022}  {and Condition \ref{cond u0}(b)},  there exist some  constants  $c_{2,9}>0$ and $\eta_0\in (\frac{\alpha-d+\gamma}{\alpha},1]$  such that
    \begin{align}\label{eq u0 3}
    \left\|\xi_{t+\e}(\x+\h) -\xi_t(\x) \right\|_{L^k(\Omega)}  \le c_{2,9}\left(\e^{\frac{\eta_0}2}+\|\h\|^{\eta_0}\right) \le   c_{2,9}\left(\e^{H/\alpha} +\|\h\|^{H}\right),
    \end{align}
    uniformly for all $t\in [0,T], \e\in (0,1)$ and $\x,\h\in \mathbb R^d$.   

    The second term  $X=\{X_t(\x)\}_{t\in [0,T], \x\in\mathbb R^d}$ solves the following equation:
         \begin{equation}\label{eq X 2}
         \frac{\partial }{\partial t}X_t(\x)=-(-\Delta)^{-\alpha/2} X_t(\x)+\sigma( u_t(\x))\dot F(t,\x), \ \ \ (t,\x)\in [0,T]\times \mathbb R^d,
         \end{equation}
    with vanishing initial condition $X_0(\x)=0$ for every $\x\in \mathbb R^d$.

      By   the Burkholder-Davis-Gundy inequality \eqref{eq BDG}   and  \eqref{eq kernel 1}, we have
       \begin{equation}\label{eq Holder X x}
       \begin{split}
      &\left\|X_{t}(\x+\h)-X_t(\x)\right\|_k^2\\
      = &\, \left\| \int_0^t\int_{\mathbb R^d} G_{t-s}^{\alpha}(\x, \y)  \left[\sigma(u_s(\y-\h))-\sigma(u_s(\y))\right] M(ds,d\y)    \right\|_k^2\\
      \le   &\,    4k \int_0^t \left\| G^{\alpha}_{t-s}(\x,\cdot)  \|\sigma(u(s,  \cdot-\h))-\sigma(u(s, \cdot))\|_{L^k(\Omega)}\right\|_{\HH}^2ds\\
    \le  &\,   4k L_{\sigma}^2\int_0^t  \sup_{\w\in \mathbb R^d} \|u(s,  \w+\h)-u(s, \w)\|_{L^k(\Omega)}^2\cdot\left\| G^{\alpha}_{t-s}(\x,\cdot) \right\|_{\HH}^2ds \\
      \le  &\, 4kc_{2,1} L_{\sigma}^2  \int_0^t  \, \sup_{\w\in \mathbb R^d} \|u(s,   \w+\h)-u(s, \w)\|_{L^k(\Omega)}^2 s^{-(d-\gamma)/\alpha}  ds,
      \end{split}
    \end{equation}
  uniformly for all    $t\in [0,T]$   and  $\x, \h\in \mathbb R^d$.

    Since $b$ is  global Lipschitz continuous, applying Minkowski's inequality,  there exists a  constant $c_{2,10}>0$ such that
      \begin{equation}\label{eq Holder Y x}
          \left\|Y_t(\x+\h)-Y_t(\x)\right\|_{L^k(\Omega)}^2\le c_{2,10}\int_0^t \sup_{\w\in \mathbb R^d}\|u(s,  \w+\h)-u(s,\w)\|_{L^k(\Omega)}^2 ds,
     \end{equation}
  uniformly for all $t\in [0,T], \x, \h\in \mathbb R^d$.

    Set
    $$
  \Lambda_{\h}(t):= \sup_{\w\in \mathbb R^d}\|u(t, \w+\h)-u(t,\w)\|_{L^k(\Omega)}^2, \ \ \ \ 0\le t\le T.
    $$
  By   \eqref{eq u0 3}, \eqref{eq Holder X x} and \eqref{eq Holder Y x}, there exists a constant $c_{2,11}>0$ such that
   $$
     \Lambda_{\h}(t)\le c_{2,11}\left(\h^{2H}+ \int_0^t \left( 1+s^{-(d-\gamma)/\alpha}\right)  \Lambda_{\h}(s)ds \right),
   $$
    uniformly in $t\in [0,T]$ and  $\h\in\mathbb R^d$. By the  fractional  Gr\"onwall's lemma, we have that for any $t\in [0,T]$ and $\|\h\|\le 1$,
        \begin{align}\label{eq Holder x}
    \Lambda_{\h}(t) \le  c_{2,12} (T)  \|\h\|^{2H},
    \end{align}
     where $c_{2,12} (T) \in (0,\infty)$.

     Similarly, by  \cite[Proposition 2.6]{KS2022} and  Lemma \ref{lem Z spatial}(b),  there exists a constant $c_{2,13}(T)>0$ such that
      \begin{align}\label{eq Holder t}
     \|u(t+\e, \x)-u(t,\x)\|_{L^k(\Omega)}\le c_{2,13}(T) \e^{ H/\alpha} ,
    \end{align}
    uniformly for all $t\in[0,T], \e\in (0,1)$ and $\x\in \mathbb R^d$.

    Putting \eqref{eq Holder x} and \eqref{eq Holder t} together, we obtain the desired result \eqref{eq u xt}.       The proof is complete.     \end{proof}

 The following estimate tells us that $\{Y_{t}(\x)\}_{t\ge0}$ defined by \eqref{eq Y}   is more regular.

    \begin{lemma}\label{Holder Y}For every $  \x, \y\in \mathbb R^d$ and for every $k\in [2,k_0]$, there exists a constant $c_{2,14}>0$ such that
   \begin{align}\label{eq regul Y}
    \sup_{t\in[0,T]}\mathbb E\left[ \left|Y_{t}(\x)-Y_t(\y) \right|^k\right]  \le c_{2,14}\|\x-\y\|^{k-1}.
  \end{align}
  \end{lemma}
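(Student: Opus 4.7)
The plan is to estimate the regularity of $Y_t(\x) - Y_t(\y)$ by absorbing the stochasticity through Minkowski's inequality and reducing the problem to a purely deterministic kernel computation. Writing
\begin{equation*}
Y_t(\x) - Y_t(\y) = \int_0^t \int_{\mathbb{R}^d} \bigl[G^{\alpha}_{t-s}(\x, \z) - G^{\alpha}_{t-s}(\y, \z)\bigr] b(u_s(\z)) \, d\z \, ds,
\end{equation*}
Minkowski's inequality in $L^k(\Omega)$ yields
\begin{equation*}
\|Y_t(\x)-Y_t(\y)\|_{L^k(\Omega)} \le \int_0^t \int_{\mathbb{R}^d} |G^{\alpha}_{t-s}(\x,\z) - G^{\alpha}_{t-s}(\y,\z)| \cdot \|b(u_s(\z))\|_{L^k(\Omega)} \, d\z\, ds.
\end{equation*}
Since $b$ is Lipschitz, $|b(r)| \le |b(0)| + L_b |r|$, so \eqref{eq u moment} gives $\sup_{s \in [0,T],\, \z \in \mathbb{R}^d} \|b(u_s(\z))\|_{L^k(\Omega)} < \infty$, and the task reduces to controlling the deterministic double integral $\int_0^t \int_{\mathbb{R}^d} |G^{\alpha}_{t-s}(\x,\z) - G^{\alpha}_{t-s}(\y,\z)| \, d\z\, ds$.

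For this deterministic integral I would apply the kernel gradient estimate \eqref{eq Green 5} and use that $G^{\alpha}_{t}(\x, \cdot)$ is a probability density to get
\begin{equation*}
\int_{\mathbb{R}^d} |G^{\alpha}_t(\x,\z) - G^{\alpha}_t(\y,\z)| \, d\z \le 2 c_{2,2} \Bigl(\frac{\|\x-\y\|}{t^{1/\alpha}} \wedge 1\Bigr).
\end{equation*}
Setting $h := \|\x-\y\|$ and splitting the $s$-integration at the threshold $t-s = h^{\alpha}$ (where the minimum switches), the contribution of $\{t-s \le h^{\alpha}\}$ is at most $h^{\alpha}$, while the contribution of $\{t-s > h^{\alpha}\}$ is $h\int_{h^\alpha}^{T} r^{-1/\alpha}\,dr \le \frac{\alpha}{\alpha-1}\,h\, T^{(\alpha-1)/\alpha}$. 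Since $\alpha > 1$, the linear term dominates for small $h$, yielding $\|Y_t(\x)-Y_t(\y)\|_{L^k(\Omega)} \le C\|\x-\y\|$ whenever $\|\x-\y\| \le 1$.

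For $\|\x-\y\| > 1$ the crude bound $\int_{\mathbb{R}^d}|G^{\alpha}_{t-s}(\x,\z) - G^{\alpha}_{t-s}(\y,\z)|\,d\z \le 2$ combined with the uniform drift moment gives $\|Y_t(\x)-Y_t(\y)\|_{L^k(\Omega)} \le C$. Raising both estimates to the $k$-th power yields $\mathbb{E}|Y_t(\x)-Y_t(\y)|^k \le C\|\x-\y\|^k \le C\|\x-\y\|^{k-1}$ for $\|\x-\y\| \le 1$, and $\mathbb{E}|Y_t(\x)-Y_t(\y)|^k \le C \le C\|\x-\y\|^{k-1}$ for $\|\x-\y\| > 1$, which proves the lemma. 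The argument is entirely routine once Minkowski absorbs the randomness; there is no genuine obstacle, and the exponent $k-1$ (weaker than the sharper $k$ available for small separations) is chosen precisely so that a single inequality holds uniformly over all $\x, \y \in \mathbb{R}^d$.
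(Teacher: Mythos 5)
Your proof is correct, and at its core it runs on the same two ingredients as the paper's: the kernel estimate \eqref{eq Green 5} (reducing everything to the deterministic quantity $\int_0^t\!\int_{\mathbb R^d}\left|G^{\alpha}_{t-s}(\x,\z)-G^{\alpha}_{t-s}(\y,\z)\right|d\z\,ds\lesssim \min\{\|\x-\y\|,1\}$) and the uniform moment bound \eqref{eq u moment}. The step that removes the randomness is different, though. The paper applies H\"older's (Jensen's) inequality with respect to the finite measure $\left|G^{\alpha}_{t-s}(\x,\z)-G^{\alpha}_{t-s}(\y,\z)\right|ds\,d\z$, namely
\begin{equation*}
\left|Y_t(\x)-Y_t(\y)\right|^k\le \Bigl(\int_0^t\!\int_{\mathbb R^d}\bigl|G^{\alpha}_{t-s}(\x,\z)-G^{\alpha}_{t-s}(\y,\z)\bigr|\,d\z\,ds\Bigr)^{k-1}\int_0^t\!\int_{\mathbb R^d}\bigl|G^{\alpha}_{t-s}(\x,\z)-G^{\alpha}_{t-s}(\y,\z)\bigr|\,\bigl|b(u_s(\z))\bigr|^k\,d\z\,ds,
\end{equation*}
which produces the exponent $k-1$ directly after taking expectations. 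You instead use Minkowski's integral inequality, pulling the $L^k(\Omega)$ norm inside and bounding $\sup_{s,\z}\|b(u_s(\z))\|_{L^k(\Omega)}$ by a constant; your explicit splitting of the time integral at $t-s=\|\x-\y\|^{\alpha}$ (where $\alpha>1$ makes the singularity $r^{-1/\alpha}$ integrable) is exactly the computation implicit in the paper's use of \eqref{eq Green 5}. What your route buys is a strictly sharper bound: $\mathbb E\left|Y_t(\x)-Y_t(\y)\right|^k\le C\|\x-\y\|^k$ for $\|\x-\y\|\le1$, which you then deliberately weaken to $k-1$. One small remark: the weakening is not actually forced by the need for uniformity in $\x,\y$, since for $\|\x-\y\|>1$ the constant bound is dominated by $\|\x-\y\|^k$ just as well as by $\|\x-\y\|^{k-1}$; the exponent $k-1$ is simply what the paper's H\"older step yields, and your argument in fact proves a slightly stronger statement than the lemma.
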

  \begin{proof} For any $t\in [0,T]$ and $\x, \y\in \mathbb R^d$, by H\"older's inequality
    and  \eqref{eq Green 5}, we have that for any $k\in [2,k_0]$,
     \begin{equation*}
  \begin{split}
   \mathbb E\left[\left|Y_t(\x)-Y_t(\y) \right|^k\right] 
     \le &\,   \left( \int_0^t ds \int_{\mathbb R^d}d\z     \left|G^{\alpha}_{t-s}(\x, \z)-G^{\alpha}_{t-s}(\y, \z)\right|  \right)^{k-1}\\
   &\,\,\, \times \left\{  \int_0^t ds \int_{\mathbb R^d}d\z \left|G^{\alpha}_{t-s}(\x, \z)-G^{\alpha}_{t-s}(\y, \z)\right|\cdot  \mathbb E\left[ \left|b(u_s(\z))\right|^k\right]\right\}\\
  \le &\,   c_{2,15} \|\x-\y\|^{k-1}\cdot  \left(1+\sup_{(s,\z)\in[0,T]\times \mathbb R^d}\mathbb E\left[ \left|u_s(\z)\right|^k\right]\right),
  \end{split}
  \end{equation*}
  where $c_{2,15}>0$.
  This, together with \eqref{eq u moment}, completes the proof.
        \end{proof}

     \section{Localization}

 For any $\e>0$ and any unit vector  $\boldsymbol e$ in $\mathbb R^d$,   define
       \begin{align}\label{eq nabla}
        (\nabla_{\e\boldsymbol  e} f)(\x):= f(\x)-f(\x-\e \boldsymbol  e)   \ \ \ \ \  (\x \in\mathbb R^d),
          \end{align}
          as a substitute for the approximate  spatial gradient of the  function $f:\mathbb R^d\rightarrow\mathbb R$.

     Let us recall the solution $Z$ to Eq.  \eqref{eq mild SHE}.
  Consider the approximate gradient operator
     \begin{align}\label{eq nabla Z}
    (\nabla_{\e\boldsymbol  e} Z_t)(\x):=Z_t(\x)-Z_{t}(\x-\e\boldsymbol e)=\int_{(0,t)\times\mathbb R^d}\left(\nabla_{\e\boldsymbol  e}  G^{\alpha}_{t-s}\right)(\x, \y)  M(ds,d\y).
     \end{align}

         Throughout, let us choose and fix  parameters
     \begin{align}\label{eq beta delta}
     \beta>1\ \ \ \ \text{and } \ \ \ \  \delta:=1+\beta^{ 1+\frac{1}{H}}.
     \end{align}
     Then, we define a family of space-time boxes as follows: For every $t\ge0, \x\in \mathbb R^d$ and $\varepsilon>0$,
     \begin{align}\label{eq Box}
     \mathbf B_{\beta}(\x, t; \e)=[t-\beta \e^{\alpha}, t]\times  \mathbf B(\x; \e \delta),
     \end{align}
     where $\mathbf B(\x; \e \delta)=\{\y\in \mathbb R^d:\, \|\x-\y\|\le \e\delta\}$.

     The following is a generalization of Proposition 4.1 in \cite{FKM2015}.
     \begin{proposition}\label{prop local} Choose and fix  $T>0$. There exists  a   positive constant $c_{3,1}$ such that
     \begin{equation}\label{eq local appr}
     \left\|(\nabla_{\e \boldsymbol e} Z_t)(\x)-\int_{ \mathbf B_{\beta}(\x, t; \e)} \left(    \nabla_{\e\boldsymbol  e}   G^{\alpha}_{t-s}\right)(\x, \y) M(ds,d\y)\right\|_{L^2(\Omega)}\le c_{3,1} \e^{H} \beta^{-\frac{d+2-\alpha-\gamma}{2\alpha}},
     \end{equation}
     simultaneously for all $\x\in \mathbb R^d$, $t\in [0,T]$, $\e\in (0,1)$, $\beta>1$ and the unit vector $\boldsymbol e$ in $\mathbb R^d$.
          \end{proposition}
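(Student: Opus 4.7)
The plan is to decompose the error into two stochastic integrals with time-disjoint supports and bound each in $L^2$ via Walsh's isometry, using the Fourier form \eqref{eq F2} for the ``far past'' piece and the covariance form \eqref{eq F1} plus the algebraic Green bound \eqref{eq Green 3} for the ``far space'' piece.

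Let $R$ denote the quantity inside the norm in \eqref{eq local appr}. The complement of $\mathbf B_{\beta}(\x,t;\e)$ inside $[0,t]\times\mathbb R^d$ splits as $E_1\sqcup E_2$, where $E_1=[0,t-\beta\e^{\alpha}]\times\mathbb R^d$ and $E_2=(t-\beta\e^{\alpha},t]\times(\mathbb R^d\setminus \mathbf B(\x;\e\delta))$. Writing $R=R_1+R_2$ with $R_i$ the corresponding stochastic integral, the disjointness of the time projections of $E_1,E_2$ gives $L^2$-orthogonality of $R_1,R_2$, so $\|R\|_{L^2(\Omega)}^2=\|R_1\|_{L^2(\Omega)}^2+\|R_2\|_{L^2(\Omega)}^2$; it suffices to bound each summand by $c\,\e^{2H}\beta^{-(d+2-\alpha-\gamma)/\alpha}$.

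For $R_1$, by \eqref{eq F2} and the identity $\mathcal F G^{\alpha}_{t-s}(\xi)=e^{-(t-s)\|\xi\|^{\alpha}}$, the shift $\nabla_{\e\boldsymbol e}$ produces a phase factor $1-e^{\pm 2\pi i\e\,\xi\cdot\boldsymbol e}$ in the Fourier transform, whence
\[
\|R_1\|_{L^2(\Omega)}^2 = 2\int_{\beta\e^{\alpha}}^{t}\!dr\int_{\mathbb R^d}\|\xi\|^{-\gamma}e^{-2r\|\xi\|^{\alpha}}\bigl(1-\cos(2\pi\e\,\xi\cdot\boldsymbol e)\bigr)d\xi.
\]
Bounding $1-\cos\theta\le\theta^2/2$ and rescaling $\xi\mapsto r^{-1/\alpha}\boldsymbol w$ reduces the inner integral to a constant multiple of $r^{-(d+2-\gamma)/\alpha}$ (finite because $\gamma<d+2$). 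The $r$-integral over $[\beta\e^{\alpha},\infty)$ converges because $d+2-\gamma>\alpha$ (from $\alpha\le 2$ and $\gamma<d$), and evaluates to $C\,\beta^{-(d+2-\alpha-\gamma)/\alpha}\,\e^{\alpha-d+\gamma-2}$. Multiplying by the leading $\e^2$ and using $2H=\alpha-d+\gamma$ delivers the target rate for $\|R_1\|_{L^2(\Omega)}$.

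For $R_2$, I would use the covariance form \eqref{eq F1} and the pointwise majorization $|\nabla_{\e\boldsymbol e}G^{\alpha}_{t-s}(\x,\y)|\le G^{\alpha}_{t-s}(\x,\y)+G^{\alpha}_{t-s}(\x-\e\boldsymbol e,\y)$ (permissible because the Riesz kernel is nonnegative). On $\{\|\y-\x\|>\e\delta\}$ with $\delta\ge 2$, we have $\|\y-(\x-\e\boldsymbol e)\|\ge\|\y-\x\|/2$, so by \eqref{eq Green 3} both Green kernels are dominated by $C(t-s)\|\y-\x\|^{-(d+\alpha)}$. After the dilation $\y,\z\mapsto\x+\e\delta\tilde\y,\x+\e\delta\tilde\z$ a pure dimensionless spatial double integral factors out (finite since $0<\gamma<d$ handles the near-diagonal and far-field integrabilities), and one gets
\[
\|R_2\|_{L^2(\Omega)}^2 \le C\,(\e\delta)^{-(d+2\alpha-\gamma)}\!\int_{t-\beta\e^{\alpha}}^{t}(t-s)^{2}\,ds \le C'\,\e^{2H}\beta^{3}\delta^{-(d+2\alpha-\gamma)}.
\]
Plugging in $\delta\ge\beta^{1+1/H}$ and using $1+1/H>2$, $\alpha>1$ and $d+2\alpha-\gamma>2\alpha>2$, a short check shows that the resulting $\beta$-exponent does not exceed $-(d+2-\alpha-\gamma)/\alpha$, so $\|R_2\|_{L^2(\Omega)}$ is bounded by the target rate with room to spare. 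The principal obstacle is this $R_2$ estimate: the signed kernel $\nabla_{\e\boldsymbol e}G$ must be replaced by a positive majorant to exploit the Riesz potential, and the spatial radius $\e\delta$ must exceed the diffusive length $(t-s)^{1/\alpha}\le\beta^{1/\alpha}\e$ by a sufficient polynomial factor in $\beta$---which is exactly what the choice $\delta=1+\beta^{1+1/H}$ accomplishes.
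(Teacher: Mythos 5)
Your argument is correct, and its first half is the paper's own: the identical splitting of $([0,t]\times\mathbb R^d)\setminus\mathbf B_{\beta}(\x,t;\e)$ into a far-past piece and a recent-but-far-in-space piece, the $L^2$-orthogonality of the two Gaussian integrals, and, for the far-past term, the same Fourier computation with $1-\cos\theta\le\theta^2/2$ yielding $C\e^{2H}\beta^{-(d+2-\alpha-\gamma)/\alpha}$. Where you genuinely diverge is the far-space term $R_2$ (the paper's $Q_2$): the paper applies H\"older's inequality with an auxiliary exponent $p>\max\{2\alpha/(\alpha-d+\gamma),d/\gamma\}$, controls one factor through the probabilistic meaning of $G^{\alpha}_s$ and the stable tail bound $\mathbb P(\|L^{(\alpha)}_1\|>\lambda)\lesssim\lambda^{-\alpha}$, and the other through a Fourier computation with the modified Riesz exponent $(p\gamma-d)/(p-1)$, after which the $\beta$-exponent must be checked to beat $-(d+2-\alpha-\gamma)/\alpha$ for $p$ large. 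You instead exploit the pointwise kernel bound \eqref{eq Green 3} off the ball (together with $\delta\ge2$, so the shifted kernel is comparable) to dominate $|\nabla_{\e\boldsymbol e}G^{\alpha}_{t-s}|$ by $C(t-s)\|\y-\x\|^{-(d+\alpha)}$ and then scale out the spatial double integral; the dimensionless integral $\int_{\|\tilde{\y}\|>1}\int_{\|\tilde{\z}\|>1}\|\tilde{\y}\|^{-(d+\alpha)}\|\tilde{\y}-\tilde{\z}\|^{\gamma-d}\|\tilde{\z}\|^{-(d+\alpha)}\,d\tilde{\y}\,d\tilde{\z}$ is indeed finite for $0<\gamma<d$, and the exponent comparison you left as ``a short check'' does hold: writing $h=2H\in(0,\alpha)$, you need $(1+2/h)(3\alpha-h)\ge 3+(2-h)/\alpha$, and the left side exceeds $2\alpha+4>6$ while the right side is below $3+2/\alpha<5$. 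Your route avoids both the auxiliary H\"older exponent and the stable-tail estimate, and gives a strictly more negative power of $\beta$ than the paper's bound; its only extra ingredient is the algebraic bound \eqref{eq Green 3}, which is available under the paper's standing restriction $\alpha\in(1,2)$ (for $\alpha=2$ the Gaussian kernel decays even faster, so the same computation goes through). Both arguments hinge on the same choice $\delta=1+\beta^{1+1/H}$ in \eqref{eq beta delta}, which makes the spatial cutoff $\e\delta$ dominate the diffusive scale $(\beta\e^{\alpha})^{1/\alpha}$ by a polynomial factor in $\beta$.
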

    \begin{proof}  By the stationary of $\{Z_t(\x)\}_{\x\in \mathbb R^d}$ and the independence of $\sigma\{M(s,\x); s\in [0,t], \x\in \mathbb R^d\}$ and $\sigma\{M(s,\x)-M(t,\x); s\ge t, \x\in \mathbb R^d\}$, we have
    \begin{equation*}
    \begin{split}
Q:=  &\,  \mathbb E\left[\left|( \nabla_{\e\boldsymbol e} Z_t)(\x) -\int_{ \mathbf B_{\beta}(\x, t; \e)} \left(    \nabla_{\e\boldsymbol e} G^{\alpha}_{t-s} \right)(\x,\y)   M(ds,d\y)\right|^2 \right]\\
 = &\,   \mathbb E\left[ \left|\int_{[(0,t)\times \mathbb R^d]\backslash   \mathbf B_{\beta}(0, t; \e)}    \left(    \nabla_{\e\boldsymbol e} G^{\alpha}_{t-s} \right)(\y)   M(ds,d\y)\right|^2 \right]\\
     =&\, \mathbb E\left[\left|\int_0^{t-\beta\e^{\alpha}}\int_{\mathbb R^d } \left( \nabla_{\e\boldsymbol e} G^{\alpha}_{t-s} \right)(\y)   M(ds,d\y)\right|^2 \right]\\
       &\,+ \mathbb E\left[\left|\int_{t-\beta\e^{\alpha}}^t\int_{\|\y\|> \e\delta}  \left( \nabla_{\e\boldsymbol e} G^{\alpha}_{t-s} \right)(\y)   M(ds,d\y)\right|^2 \right]\\
       =:&\, Q_1+Q_2.
    \end{split}
         \end{equation*}
 Because $\mathcal F\left(\nabla_{\e\boldsymbol  e} G^{\alpha}_{t-s} \right)(\cdot)(\xi)=e^{-(t-s)\|\xi\|^{\alpha}}\left(1-e^{-2\pi\e i \xi\cdot \boldsymbol  e}\right)$,    \eqref{eq kernel 1} implies that
   \begin{equation}\label{eq Q1}
    \begin{split}
    Q_1=&\,  \int_{\beta\e^{\alpha}}^t ds \int_{\mathbb R^d } d\xi \, e^{-2s\|\xi\|^{\alpha}}\left[1- \cos\left(2\pi\e   \xi\cdot \boldsymbol  e\right)  \right]\|\xi\|^{-\gamma}\\
    \le &\,   2\pi^2 \e^2\int_{\beta\e^{\alpha}}^{\infty}ds \int_{\mathbb R^d } d\xi \, e^{-2s\|\xi\|^{\alpha}} \|\xi\|^{2-\gamma}\\
     = &\,    \pi^2 \e^2  \int_{\mathbb R^d }     e^{-2\beta\e^{\alpha}\|\xi\|^{\alpha}} \|\xi\|^{2-\alpha-\gamma} d\xi \\
        =&\   \pi^2 \int_{\mathbb R^d } e^{- \|\xi\|^{\alpha}} \|\xi\|^{2-\alpha-\gamma}d\xi \cdot (2\beta)^{-\frac{d+2-\alpha-\gamma}{\alpha}}    \e^{\alpha-d+\gamma},
        \end{split}
         \end{equation}
     where   $1-\cos(a)\le a^2/2$ ($a\in \mathbb R$)  has been used in the second step.

    Let us estimate $Q_2$ with a litter more effort.     First,  by the  H\"older  inequality, we have that for  any  $p>\max\left\{\frac{2\alpha  }{\alpha-d+\gamma},\frac{d}{\gamma}\right\}$,
      \begin{equation}\label{eq Q2}
    \begin{split}
    Q_2=&\,  \int_0^{\beta\e^{\alpha}} ds \int_{\|\y\|>\e\delta}d\y \int_{\|\z\|>\e\delta}  d\z  \left(   \nabla_{\e \boldsymbol  e} G^{\alpha}_{s} \right)(\y) \|\y-\z\|^{-d+\gamma}\left(   \nabla_{\e \boldsymbol  e} G^{\alpha}_{s} \right)(\z)      \\
   \le &\,   \left\{\int_0^{\beta \e^{\alpha}}ds  \int_{\|\y\|>\e\delta}d\y \int_{\|\z\|>\e\delta} d\z \left| \left(   \nabla_{\e \boldsymbol  e} G^{\alpha}_{s} \right)(\y)\right|   \left|\left(   \nabla_{\e \boldsymbol  e} G^{\alpha}_{s} \right)(\z)\right|    \right\}^{1/p}\\
 & \times \left\{ \int_0^{\beta \e^{\alpha}}ds \int_{\mathbb R^d}d\y \int_{\mathbb R^d} d\z  \left|\left(   \nabla_{\e \boldsymbol  e} G^{\alpha}_{s} \right)(\y)\right|\|\y-\z\|^{-\frac{p}{p-1}(d-\gamma)}\left|\left(   \nabla_{\e \boldsymbol  e} G^{\alpha}_{s} \right)(\z)\right|\right\}^{\frac{p}{p-1}}\\
 =:&\, Q_{2,1}\cdot Q_{2,2}.
     \end{split}
         \end{equation}

    Next,  we use the argument in the proof of (4.15) in \cite{FKM2015} to estimate $Q_{2,1}$.
           Notice that
        \begin{equation}\label{eq Q21}
    \begin{split}
    \int_{\|\y\|>\e\delta}\left| \left(   \nabla_{\e \boldsymbol  e} G^{\alpha}_{s} \right)(\y)\right|d\y
   \le\, \int_{\|\y\|>\e\delta}\left|  G^{\alpha}_{s}  (\y)\right|d\y+\int_{\|\y\|>\e\delta}\left|    G^{\alpha}_{s}  (\y-\e  \boldsymbol e)\right|d\y.
              \end{split}
         \end{equation}
         Let $L^{(\alpha)}=\left\{L^{(\alpha)}_t\right\}_{t\ge0}$ be the  rotationally   invariant $\alpha$-stable distribution.
    If $\eta\in [0,1]$, then
          \begin{equation}\label{eq Q211}
      \int_{\|\y\|>\e\delta}\left|  G^{\alpha}_{s}  (\y-\eta \e \boldsymbol e)\right|d\y\le \mathbb P\left\{|L^{(\alpha)}_s|\ge \e(\delta-\eta)\right\}.
        \end{equation}
    Since $\delta=1+\beta^{1+\frac{1}{H}}$ [see \eqref{eq beta delta}], by the triangle inequality and the scaling of $L^{(\alpha)}$, we have
       \begin{equation}\label{eq Q212}
         \begin{split}
    \sup_{0\le \eta \le 1}  \int_{\|\y\|>\e\delta}\left|  G^{\alpha}_{s}  (\y-\eta \e \boldsymbol e)\right|d\y\le &\, \mathbb P\left\{|L^{(\alpha)}_s|\ge \e\beta^{1+\frac{1}{H}}\right\}\\
    =&\, \mathbb P\left\{|L^{(\alpha)}_1|\ge \e \beta^{1+\frac{1}{H}}  s^{-1/\alpha}\right\}\\
    \le &\, \e^{-\alpha}\beta^{-\alpha(1+\frac{1}{H})} s,
        \end{split}
        \end{equation}
  where we have used a well-known bound on the tail of the rotationally invariant $\alpha$-stable distribution
  $$
  \mathbb P\left(\left\|L^{(\alpha)}_1\right\|>\lambda \right)\le \text{const} \cdot \lambda^{-\alpha} \ \ \ \text{for all }\ \lambda>0.
  $$
   See, e.g., \cite{Appl}.

       By \eqref{eq Q21}-\eqref{eq Q212}, we have
        \begin{equation}\label{eq Q213}
         \begin{split}
         Q_{21}\le  \,  \left(3^{-1}  \e^{\alpha }\beta^{-\left(2\alpha\left(1+\frac{1}{H}\right)-3\right)}\right)^{1/p}.
     \end{split}
        \end{equation}

          By    using  elementary inequality of
 \begin{align}\label{eq elem}
 2|\langle f, g\rangle_{\mathcal H}|\le  \|f\|_{\mathcal H}^2+ \|g\|_{\mathcal H}^2,
 \end{align}
  we have
       \begin{equation}\label{eq kernel Q22}
 \begin{split}
    Q_{22} = &\left\{\int_0^{\beta \e^{\alpha}}ds \int_{\mathbb R^d}d\y\int_{\mathbb R^d} d\z  \left|\left(   \nabla_{\e \boldsymbol e} G^{\alpha}_{s} \right)(\y)\right|\|\y-\z\|^{-\frac{p}{p-1}(d-\gamma)}\left|\left(   \nabla_{\e \boldsymbol e} G^{\alpha}_{s} \right)(\z)\right|\right\}^{\frac{p-1}{p}}\\
    \le &\, \left\{4 \int_0^{\beta \e^{\alpha}}ds \int_{\mathbb R^d}d\y\int_{\mathbb R^d} d\z \, G^{\alpha}_{s}  (\y)\|\y-\z\|^{- d+\frac{1}{p-1}(p\gamma-d)}  G^{\alpha}_{s} (\z) \right\}^{\frac{p-1}{p}}\\
    =&\, \left\{4 c_{3,2} \int_0^{\beta \e^{\alpha}}ds\int_{\mathbb R^d}d\xi\, \|\xi\|^{-\frac{1}{p-1}(p\gamma-d) }\, \left|\mathcal F G^{\alpha}_s(\cdot)(\xi)\right|^2\right\}^{\frac{p-1}{p}}\\
    =&\, \left\{4 c_{3,2} \int_0^{\beta \e^{\alpha}}ds\int_{\mathbb R^d}d\xi\, \|\xi\|^{-\frac{1}{p-1}(p\gamma-d) }\,e^{-2s\|\xi\|^{\alpha}} \right\}^{\frac{p-1}{p}} \\
     =&\, \left\{4 c_{3,2} c_{3,3}  \int_0^{\beta \e^{\alpha}}s^{-\frac{p}{(p-1)\alpha}(d-\gamma) } ds\right\}^{\frac{p-1}{p}}\\
      =&\,  \left[ 1-\frac{p(d-\gamma)}{(p-1)\alpha}  \right]^{\frac{p-1}{p}} \left(4 c_{3,2} c_{3,3}\right)^{{\frac{p-1}{p}}} (\beta \e^{\alpha})^{\frac{p-1}{p}-\frac{d-\gamma}{\alpha}},
            \end{split}
 \end{equation}
     where  the constant $c_{3,2}\in (0,\infty)$ is   of the form $c_{1,1}$ in \eqref{eq c11} replacing $\gamma$ by $(p\gamma-d)/(p-1)$,
     and $c_{3,3}=\int_{\mathbb R^d} \|\xi\|^{-\frac{1}{p-1}(p\gamma-d)}\, e^{-2 \|\xi\|^{\alpha}}d\xi<\infty$.

    We put   \eqref{eq Q213} and \eqref{eq kernel Q22} together in order to see that
     \begin{equation}\label{eq Q2 31}
     Q_{2}\le \text{const}\cdot \e^{\alpha-d+\gamma}\beta^{-\left(2\alpha(1+\frac{2}{\alpha-d+\gamma})-3\right)/p+\left(p-1\right)/p-(d-\gamma)/\alpha}.
      \end{equation}

     Finally, we can combine our bounds for $Q_{1}$ and $Q_{2}$ in order deduce the inequality,
          \begin{equation}
     Q \le \text{const}\cdot \e^{\alpha-d+\gamma}\left(\beta^{-\frac{d+2-\alpha-\gamma}{\alpha}} + \beta^{-\left(2\alpha(1+\frac{2}{\alpha-d+\gamma})-3\right)/p+\left(p-1\right)/p-(d-\gamma)/\alpha}\right).
      \end{equation}
     Elementary analysis of the exponent of $\beta$ shows that
     $$
     -\frac{d+2-\alpha-\gamma}{\alpha}\ge  -\frac{1}{p}\left(2\alpha\left(1+\frac{2}{\alpha-d+\gamma}\right)-3\right)+\frac{p-1}{p}-\frac{d-\gamma}{\alpha},
     $$
     because  $p\ge\frac{2\alpha}{\alpha-d+\gamma }$. Since $\beta>1$,  we get the desired result \eqref{eq local appr}.

     The proof is complete.
             \end{proof}

     Since Corollary \ref{coro Z spatial eq} says that $\mathbb E\left(\left|(\nabla_{\e \boldsymbol  e} Z_t)(\x) \right|^2 \right)\approx \text{const}\cdot \e^{\alpha-d+\gamma} $ when $\e\ll1$, Proposition \ref{prop local} shows that
     \begin{align}
     \nabla_{\e \boldsymbol  e} Z_t(\x)\approx \int_{ \mathbf B_{\beta}(\x, t; \e)} \left( \nabla_{\e \boldsymbol e}   G^{\alpha}_{t-s}\right)(\x, \y)   M(ds,d\y)\ \ \ \text{when } \ \beta\gg1 \text { and } \, \e\ll 1.
     \end{align}
   That is,  the most of contribution to the stochastic integral in \eqref{eq nabla Z} comes from the region $\mathbf B_{\beta}(\x, t; \e)$.  This is  the so-called   {\it strong localization} property  in \cite{FKM2015}.

  The next result describes  the  strong localization property   the solution   to the   nonlinear heat equation \eqref{eq SFBE}.
 \begin{lemma}\label{coro local appr2}
 Choose and fix  $T>0$ and $k\in [2,k_0]$. Then, there exists a   constant $c_{3,4}>0$ such that
     \begin{equation}\label{eq local appr2}
     \begin{split}
    & \left\|(\nabla_{\e \boldsymbol  e} X_t)(\x)-\int_{ \mathbf B_{\beta}(\x, t; \e)} \left(    \nabla_{\e \boldsymbol  e}   G^{\alpha}_{t-s}\right)(\x, \y)\sigma(u_s(\y))   M(ds,d\y)\right\|_{L^k(\Omega)} \\
     \le&\,  c_{3,4} \e^{H}  \beta^{-\frac{2+d-\alpha-\gamma}{2\alpha}},
     \end{split}
     \end{equation}
     simultaneously for all $\x\in \mathbb R^d$, $t\in[0,T]$, $\e\in (0,1)$, $\beta>1$ and the unit vector $\boldsymbol  e$ in $\mathbb R^d$.
  \end{lemma}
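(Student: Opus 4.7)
The plan is to mirror the proof of Proposition \ref{prop local}, with the additional factor $\sigma(u_s(\y))$ handled via the Burkholder-Davis-Gundy inequality of Proposition \ref{prop BDG} together with the uniform moment bound \eqref{eq u moment}. By the definition \eqref{eq X} of $X$, the quantity whose $L^k$-norm we wish to control is exactly the stochastic integral of $(\nabla_{\e\boldsymbol e} G^\alpha_{t-s})(\x,\y)\sigma(u_s(\y))$ over the complement $[(0,t)\times\mathbb R^d]\setminus\mathbf B_\beta(\x,t;\e)$. As in Proposition \ref{prop local}, I would split this complement into an early-time part $R_1$ (for $s\in(0,t-\beta\e^\alpha)$, all $\y\in\mathbb R^d$) and a far-space part $R_2$ (for $s\in(t-\beta\e^\alpha,t)$ with $\|\y-\x\|>\e\delta$).

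Applying Proposition \ref{prop BDG} to each piece bounds $\|R_i\|_{L^k(\Omega)}^2$ by a multiple of the time integral of the $\HH$-norm squared of $(\nabla_{\e\boldsymbol e} G^\alpha_{t-s})(\x,\cdot)\cdot\|\sigma(u_s(\cdot))\|_{L^k(\Omega)}$, restricted to the appropriate spatial region. The Lipschitz property of $\sigma$ [Condition \ref{cond u0}(c)] together with \eqref{eq u moment} furnishes a finite constant $C_T$ with $\sup_{s\in[0,T],\,\y\in\mathbb R^d}\|\sigma(u_s(\y))\|_{L^k(\Omega)}\le C_T$. Pulling this uniform bound out of the $\HH$-norm reduces the estimate to a deterministic $\HH$-norm integral of $(\nabla_{\e\boldsymbol e} G^\alpha_{t-s})(\x,\cdot)$ over the same region that appeared in Proposition \ref{prop local}, and I would then quote the computations already made there: the Fourier-side estimate \eqref{eq Q1} for the analogue of $Q_1$ (using $1-\cos(a)\le a^2/2$), and the H\"older-splitting \eqref{eq Q2}--\eqref{eq kernel Q22} with auxiliary exponent $p>\max\{2\alpha/(\alpha-d+\gamma),\,d/\gamma\}$ combined with the stable-process tail bound \eqref{eq Q212}. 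The exponent comparison at the end of that proof shows the Fourier contribution dominates, yielding $\e^{\alpha-d+\gamma}\beta^{-(d+2-\alpha-\gamma)/\alpha}$; taking square roots gives the desired rate $\e^H\beta^{-(2+d-\alpha-\gamma)/(2\alpha)}$ since $H=(\alpha-d+\gamma)/2$.

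The only conceptual subtlety is that Proposition \ref{prop BDG} produces an upper bound rather than an exact $L^2$-identity, with $\|\sigma(u_s(\y))\|_{L^k(\Omega)}$ sitting inside the $\HH$-norm rather than outside it. Since that norm is uniformly bounded in $(s,\y)$, this causes no difficulty: the reduction to the deterministic spatial calculations of Proposition \ref{prop local} is clean and costs only the overall constant $C_T^2$, so no genuinely new spatial estimates are needed beyond those already established.
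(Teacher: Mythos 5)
Your treatment of the far-space piece $R_2$ is fine and is exactly what the paper does: there the argument of Proposition \ref{prop local} already works with $\left|\left(\nabla_{\e\boldsymbol e}G^{\alpha}_{s}\right)\right|$ (the H\"older splitting \eqref{eq Q2}--\eqref{eq kernel Q22}), so bounding $\|\sigma(u_s(\y))\|_{L^k(\Omega)}$ uniformly via \eqref{eq u moment} and quoting \eqref{eq Q2 31} is legitimate. The gap is in the early-time piece $R_1$. You propose to pull the uniform bound $C_T$ on $\|\sigma(u_s(\cdot))\|_{L^k(\Omega)}$ out of the $\HH$-norm in \eqref{eq BDG} and then quote the Fourier-side estimate \eqref{eq Q1}. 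But the kernel $\nabla_{\e\boldsymbol e}G^{\alpha}_{t-s}$ is sign-changing, and the quadratic form $\|\cdot\|^2_{\HH}$ (equivalently, the double integral against the Riesz kernel) is \emph{not} monotone under multiplication by a bounded nonnegative weight for sign-changing functions: to pull out the supremum you must first pass to $\left|\nabla_{\e\boldsymbol e}G^{\alpha}_{t-s}\right|$, and the quantity $\int\!\!\int \left|\nabla_{\e\boldsymbol e}G\right|(\y)\,\|\y-\z\|^{-(d-\gamma)}\left|\nabla_{\e\boldsymbol e}G\right|(\z)\,d\y\,d\z$ is in general much larger than $\|\nabla_{\e\boldsymbol e}G\|^2_{\HH}$. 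Once absolute values are taken, the Fourier identity $\mathcal F\left(\nabla_{\e\boldsymbol e}G^{\alpha}_{t-s}\right)(\xi)=e^{-(t-s)\|\xi\|^{\alpha}}\bigl(1-e^{-2\pi\e i\,\xi\cdot\boldsymbol e}\bigr)$ and the factor $1-\cos(2\pi\e\,\xi\cdot\boldsymbol e)\le 2\pi^2\e^2\|\xi\|^2$ --- the sole source of the $\e^2$ gain in \eqref{eq Q1} --- are no longer available; without that gain the time integral over $(0,t-\beta\e^{\alpha})$ of $\|G^{\alpha}_{t-s}\|^2_{\HH}\sim (t-s)^{-(d-\gamma)/\alpha}$ is merely $O(1)$ and does not produce the required $\e^{\alpha-d+\gamma}\beta^{-(2+d-\alpha-\gamma)/\alpha}$. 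So the "costs only the constant $C_T^2$" step is precisely where the proof breaks.

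The paper repairs this by extracting the $\e$-gain \emph{pointwise}, before absolute values matter: by \eqref{eq Green 5}, $\bigl|\bigl(\nabla_{\e\boldsymbol e}G^{\alpha}_{t-s}\bigr)(\y)\bigr|\le c_{2,2}\,\e\,(t-s)^{-1/\alpha}\bigl(G^{\alpha}_{t-s}(\y)+G^{\alpha}_{t-s}(\y-\e\boldsymbol e)\bigr)$. Then the BDG bound with absolute values, the uniform moment bound, \eqref{eq elem} and the Fourier/scaling computation for the \emph{nonnegative} Green kernels give, as in \eqref{eq L Q1},
\begin{equation*}
Q_1\le \mathrm{const}\cdot\e^{2}\int_{\beta\e^{\alpha}}^{\infty}s^{-\frac{2+d-\gamma}{\alpha}}\,ds
=\mathrm{const}\cdot\e^{\alpha-d+\gamma}\,\beta^{-\frac{2+d-\alpha-\gamma}{\alpha}},
\end{equation*}
the extra singularity $(t-s)^{-2/\alpha}$ being harmless on $(0,t-\beta\e^{\alpha})$ and in fact producing the stated $\beta$-decay. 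You need to replace your "quote \eqref{eq Q1}" step by this pointwise gradient estimate (or some other device compatible with absolute values); with that substitution the rest of your outline goes through.
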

 
        \begin{proof}  { As in the proof of Proposition \ref{prop local}, we have}  { 
    \begin{equation*}
    \begin{split}
Q:=  &\, \left\| ( \nabla_{\e\boldsymbol e} X_t)(\x) -\int_{ \mathbf B_{\beta}(\x, t; \e)} \left(    \nabla_{\e\boldsymbol e} G^{\alpha}_{t-s} \right)(\x,\y)   \sigma(u_s(\y))M(ds,d\y)  \right\|_{L^k(\Omega)}^2\\
 = &\,   \left\|\int_{[(0,t)\times \mathbb R^d]\backslash   \mathbf B_{\beta}(0, t; \e)}    \left(    \nabla_{\e\boldsymbol e} G^{\alpha}_{t-s} \right)(\y) \sigma(u_s(\y))  M(ds,d\y) \right\|_{L^k(\Omega)}^2\\
     \le&\,\left\|\int_0^{t-\beta\e^{\alpha}}\int_{\mathbb R^d } \left( \nabla_{\e\boldsymbol e} G^{\alpha}_{t-s} \right)(\y) \sigma(u_s(\y))  M(ds,d\y) \right\|_{L^k(\Omega)}^2\\
       &\,+ \left\|\int_{t-\beta\e^{\alpha}}^t\int_{\|\y\|> \e\delta}  \left( \nabla_{\e\boldsymbol e} G^{\alpha}_{t-s} \right)(\y) \sigma(u_s(\y))  M(ds,d\y) \right\|_{L^k(\Omega)}^2\\
       =:&\, Q_1+Q_2.
    \end{split}
         \end{equation*}
         }
  { 
By using \eqref{eq u moment} and using the same argument in the proof of \eqref{eq Q2 31}, we can obtain that
    \begin{align}\label{eq L Q2}
    Q_2\le c_{3,5} \e^{  \alpha-d+\gamma  } \beta^{-\frac{ 2+d-\alpha-\gamma}{\alpha}},
     \end{align}
     where $c_{3,5}>0$.}
  { 
 By using the Burkholder-Davis-Gundy inequality \eqref{eq BDG},  \eqref{eq Lip}, \eqref{eq u moment}, \eqref{eq Green 5} and \eqref{eq elem},
  we have }
  { 
             \begin{equation} \label{eq L Q1}
             \begin{split}
     Q_1\le &\,    4k \int_0^{t-\beta \e^{\alpha}}ds \int_{\mathbb R^d}d\y\int_{\mathbb R^d} d\z \left| \left(\nabla_{\e\boldsymbol e}   G^{\alpha}_{t-s}\right)(  \y)  \right|   \left\| \sigma(u_s(\y)) \right\|_{L^k(\Omega)}\\
     &\,\,\,\, \,\,\,\, \cdot  \|\y-\z\|^{-(d-\gamma)} \cdot \left| \left(\nabla_{\e\boldsymbol e}  G^{\alpha}_{t-s}\right)(  \z)\right|   \left\| \sigma(u_s(\z))   \right\|_{L^k(\Omega)}\\
     \le&\,  c_{3,6}  \e^2\int_0^{t-\beta \e^{\alpha}}  (t-s)^{-\frac{2}{\alpha}}ds\,   \int_{\mathbb R^d}d\y\int_{\mathbb R^d} d\z     \left(  G^{\alpha}_{t-s} (\y)+  G^{\alpha}_{t-s} ( \y-\e\boldsymbol e)   \right) \\
     &\,\,\,\, \,\,\,\, \cdot  \|\y-\z\|^{-(d-\gamma)} \cdot     \left(  G^{\alpha}_{t-s} (\z)+  G^{\alpha}_{t-s} ( \z-\e\boldsymbol e)   \right) \\
          \le&\, 4 c_{3,6} \e^2\int_0^{t-\beta \e^{\alpha}}  (t-s)^{-\frac{2}{\alpha}}ds\,   \int_{\mathbb R^d}d\y\int_{\mathbb R^d} d\z    \,   G^{\alpha}_{t-s} (\y)\cdot  \|\y-\z\|^{-(d-\gamma)} \cdot     G^{\alpha}_{t-s} (\z) \\
          =&\, 4c_{1,1}^{-1}c_{3,6}   \e^2\int_0^{t-\beta \e^{\alpha}}  (t-s)^{-\frac{2}{\alpha}}ds\,
 \int_{\mathbb R^d}d\xi\, \|\xi\|^{-\gamma}\, e^{-2(t-s)  \|\xi\|^{\alpha}}\\
              \le &\,  4c_{1,1}^{-1}c_{3,6}   \e^2  \int_{\beta \e^{\alpha}}^{\infty}  s^{-\frac{2+d-\gamma}{\alpha}}ds\cdot \int_{\mathbb R^d} \|\xi\|^{-\gamma}\, e^{-2 \|\xi\|^{\alpha}}d\xi\\
     = &\, \Big(4c_{1,1}^{-1}c_{3,6}  \int_{\mathbb R^d} \|\xi\|^{-\gamma}\, e^{-2 \|\xi\|^{\alpha}}d\xi \Big) \e^{  \alpha-d+\gamma  } \beta^{-\frac{2+d-\alpha-\gamma}{\alpha}},
        \end{split}
        \end{equation}
 where $c_{3,6}>0$. }
  {Putting \eqref{eq L Q2} and  \eqref{eq L Q1} together  and using  the Minkowski  inequality,   we obtain \eqref{eq local appr2}.
                 The proof is complete.  }     \end{proof}

     \begin{lemma}\label{lem local appr3} Choose and fix $T>0$, $k\in [2,k_0]$ and   $\beta:=\e^{-l}$ for some $l>0$. Then, there exists a  constant $c_{3,7}>0$ such that
       \begin{equation}\label{eq local appr3}
     \begin{split}
    &  \left\| \int_{ \mathbf B_{\beta}(\x, t; \e)} \left(\nabla_{\e\boldsymbol e}   G^{\alpha}_{t-s}\right)(\x, \y)\left[\sigma(u_s(\y))- {\sigma(u_t(\tilde \x))} \right]  M(ds,d\y)\right\|_{L^k(\Omega)} \\
     \le&\,  c_{3,7} \e^{2H} \beta^{1+\alpha-d+\gamma},
     \end{split}
     \end{equation}
         simultaneously for all $\x\in \mathbb R^d$,  $t\in [0,T]$, $\e\in (0,1)$, $l>0$, $\tilde \x\in \mathbf B(\x; \e \delta)$      and the unit vector $\boldsymbol e$ in $\mathbb R^d$.
       \end{lemma}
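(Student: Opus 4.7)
The plan is to apply the Burkholder--Davis--Gundy inequality of Proposition~\ref{prop BDG} and reduce the problem to a deterministic estimate. Let $Q$ denote the square of the left-hand side of \eqref{eq local appr3}. Writing the stochastic integrand as $(\nabla_{\e\boldsymbol e}G^{\alpha}_{t-s})(\x,\y)\,\mathbf 1_{\mathbf B(\x;\e\delta)}(\y)[\sigma(u_s(\y))-\sigma(u_t(\tilde\x))]$ and extending the time integral to $[t-\beta\e^{\alpha},t]$, the BDG inequality yields
\[
Q\le 4k\int_{t-\beta\e^{\alpha}}^{t}\Bigl\|(\nabla_{\e\boldsymbol e}G^{\alpha}_{t-s})(\x,\cdot)\,\mathbf 1_{\mathbf B(\x;\e\delta)}(\cdot)\,\bigl\|\sigma(u_s(\cdot))-\sigma(u_t(\tilde\x))\bigr\|_{L^{k}(\Omega)}\Bigr\|_{\HH}^{2}ds.
\]

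Next I would bound the pointwise $L^{k}$-factor uniformly on the box. Lipschitz continuity of $\sigma$ together with the spatio-temporal H\"older regularity of Proposition~\ref{prop Holder} (using \eqref{eq u moment} to absorb constants when the arguments exceed $1$) gives
$$\|\sigma(u_s(\y))-\sigma(u_t(\tilde\x))\|_{L^{k}(\Omega)}\ls (t-s)^{H/\alpha}+\|\y-\tilde\x\|^{H}.$$
For $(s,\y)\in\mathbf B_{\beta}(\x,t;\e)$ one has $t-s\le\beta\e^{\alpha}$ and $\|\y-\tilde\x\|\le\|\y-\x\|+\|\x-\tilde\x\|\le 2\e\delta$; combined with $\delta=1+\beta^{1+1/H}\ls\beta^{1+1/H}$ (valid since $\beta>1$) and the trivial inequality $H/\alpha\le 1+H$, both terms are dominated by a constant multiple of $\e^{H}\beta^{1+H}$, a bound that is uniform on the box.

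To handle the $\HH$-norm I would exploit the positivity of the Riesz kernel. Since $\nabla_{\e\boldsymbol e}G^{\alpha}_{t-s}$ has changing sign, the estimate
$\|f\phi\|_{\HH}^{2}\le\|\phi\|_{\infty}^{2}\cdot c_{1,1}\!\iint|f(\y)||f(\z)|\|\y-\z\|^{-(d-\gamma)}d\y d\z$ (valid whenever $\phi\ge 0$) lets me extract the $\sigma$-factor at the price of passing to $|\nabla_{\e\boldsymbol e}G^{\alpha}_{t-s}|\le G^{\alpha}_{t-s}(\cdot)+G^{\alpha}_{t-s}(\cdot-\e\boldsymbol e)$. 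Expanding the product, using translation invariance of the Riesz kernel and Cauchy--Schwarz on the cross terms reduces the spatial integral to $4\|G^{\alpha}_{t-s}\|_{\HH}^{2}$, and the Fourier computation already carried out in \eqref{eq kernel 1} shows $\|G^{\alpha}_{t-s}\|_{\HH}^{2}=c\,(t-s)^{-(d-\gamma)/\alpha}$. Since Condition~\ref{cond u0}(a) guarantees $d-\gamma<\alpha$, integrating in $s$ yields
$$\int_{t-\beta\e^{\alpha}}^{t}(t-s)^{-(d-\gamma)/\alpha}ds=\int_{0}^{\beta\e^{\alpha}}u^{-(d-\gamma)/\alpha}du\ls\beta^{2H/\alpha}\e^{2H}.$$
Combining the three ingredients produces $Q\ls\e^{2H}\beta^{2(1+H)}\cdot\e^{2H}\beta^{2H/\alpha}$, hence $Q^{1/2}\ls\e^{2H}\beta^{1+H+H/\alpha}\le\e^{2H}\beta^{1+2H}$, which is \eqref{eq local appr3}.

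The main obstacle is precisely the sign change of $\nabla_{\e\boldsymbol e}G^{\alpha}_{t-s}$: after multiplication by the non-constant function $\y\mapsto\|\sigma(u_s(\y))-\sigma(u_t(\tilde\x))\|_{L^{k}}$ one cannot directly invoke the convenient Fourier expression of the $\HH$-norm. Dominating $|\nabla_{\e\boldsymbol e}G^{\alpha}|$ by $G^{\alpha}(\cdot)+G^{\alpha}(\cdot-\e\boldsymbol e)$ circumvents this at the cost of the extra factor $\beta^{H/\alpha}$ appearing in the final exponent; the resulting bound is still comfortably within the required $\beta^{1+2H}$ and is sharp enough for the localization applications in later sections.
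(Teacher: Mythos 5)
Your argument has a genuine gap at its very first step: you apply the Burkholder--Davis--Gundy inequality of Proposition~\ref{prop BDG} to the integrand $(\nabla_{\e\boldsymbol e}G^{\alpha}_{t-s})(\x,\y)\mathbf 1_{\mathbf B(\x;\e\delta)}(\y)\,[\sigma(u_s(\y))-\sigma(u_t(\tilde\x))]$, but this random field is \emph{not} predictable on the time interval $[t-\beta\e^{\alpha},t]$, since $u_t(\tilde\x)$ depends on the noise over the whole box. Proposition~\ref{prop BDG} is stated (and is only valid) for predictable integrands; the quantity in the lemma only makes sense as the difference of the Walsh integral of $\sigma(u_s(\y))$ and the product $\sigma(u_t(\tilde\x))\int_{\mathbf B_{\beta}(\x,t;\e)}(\nabla_{\e\boldsymbol e}G^{\alpha}_{t-s})(\x,\y)\,M(ds,d\y)$, and that product is a genuinely correlated product, not a martingale-type integral, so the isometry/BDG bound you invoke simply does not apply to it. This anticipation problem is exactly what the lemma is about, and the paper's proof is built around it: one first replaces $\sigma(u_t(\tilde\x))$ by $\sigma(u_{t-\beta\e^{\alpha}}(\tilde\x))$, which is measurable with respect to the noise up to time $t-\beta\e^{\alpha}$ and hence yields a predictable integrand on the box, applies BDG to that difference ($Q_1$ in the paper), and then bounds the replacement cost $\bigl\|[\sigma(u_t(\tilde\x))-\sigma(u_{t-\beta\e^{\alpha}}(\tilde\x))]\int_{\mathbf B_{\beta}(\x,t;\e)}(\nabla_{\e\boldsymbol e}G^{\alpha}_{t-s})(\x,\y)M(ds,d\y)\bigr\|_{L^k(\Omega)}$ separately by H\"older's inequality in $L^{2k}$, using the temporal H\"older estimate $\|u_t(\tilde\x)-u_{t-\beta\e^{\alpha}}(\tilde\x)\|_{L^{2k}(\Omega)}\lesssim(\beta\e^{\alpha})^{H/\alpha}$ and the Gaussian bound $\|(\nabla_{\e\boldsymbol e}Z_t)(\x)\|_{L^{2k}(\Omega)}\lesssim\e^{H}$. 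Without this conditioning step your first display is unjustified and the proof does not go through.

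The rest of your computation is fine and, once the intermediate point $u_{t-\beta\e^{\alpha}}(\tilde\x)$ is inserted, it would in fact yield the claimed bound: your uniform estimate $\sup_{(s,\y)\in\mathbf B_{\beta}(\x,t;\e)}\|\sigma(u_s(\y))-\sigma(u_{t-\beta\e^{\alpha}}(\tilde\x))\|_{L^k(\Omega)}\lesssim\e^{H}\beta^{1+H}$ from Proposition~\ref{prop Holder}, the domination $|\nabla_{\e\boldsymbol e}G^{\alpha}_{t-s}|\le G^{\alpha}_{t-s}(\cdot)+G^{\alpha}_{t-s}(\cdot-\e\boldsymbol e)$ together with positivity of the Riesz kernel, and $\int_0^{\beta\e^{\alpha}}u^{-(d-\gamma)/\alpha}du\lesssim\e^{2H}\beta^{2H/\alpha}$ give $\e^{2H}\beta^{1+H+H/\alpha}\le\e^{2H}\beta^{1+2H}$, which matches the paper's exponent and even avoids the $p$-H\"older interpolation and the use of $\beta=\e^{-l}$ that the paper needs to absorb a negative power of $\e$. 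So the analytic estimates are sound; what is missing is the measurability argument that makes the BDG step legitimate, and that is the key idea of this lemma.
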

       \begin{proof}           
  { Let $t\in [0,T]$ and $\tilde \x\in  \mathbf B(\x; \e \delta)$.
       We first consider a related quantity $Q_1$, where
                     \begin{equation}\label{eq local appr4}
     \begin{split}
     Q_1:= \left\| \int_{ \mathbf B_{\beta}(\x, t; \e)} \left(    \nabla_{\e\boldsymbol e}   G^{\alpha}_{t-s}\right)(\x, \y)\left[\sigma(u_s(\y))-\sigma(u_{t-\beta\e^{\alpha}}(\tilde \x)) \right]  M(ds,d\y)\right\|_{L^k(\Omega)}^2.
     \end{split}
     \end{equation}
              }
  { 
       Since $u_{t-\beta\e^{\alpha}}(\tilde \x)$ is measurable with respect to the time-white noise of $[0,t-\beta\e^{\alpha}]\times \mathbb R^d$, it is independent of the time-white noise of $\mathbf B_{\beta}(\x, t; \e)$. Therefore, by the Burkholder-Davis-Gundy inequality \eqref{eq BDG}, we have
             \begin{align*}
     Q_1\le &\,    2k \int_{t-\beta \e^{\alpha}}^tds \int_{\mathbf B(\x; \e \delta)}d\y\int_{\mathbf B(\x; \e \delta)} d\z \left(\nabla_{\e\boldsymbol e}   G^{\alpha}_{t-s}\right)(\x, \y)     \left\| \sigma(u_s(\y))-\sigma(u_{t-\beta\e^{\alpha}} (\tilde \x)) \right\|_{L^k(\Omega)}\\
     &\,\,\,\, \,\,\,\, \cdot  \|\y-\z\|^{-(d-\gamma)} \cdot  \left(\nabla_{\e\boldsymbol e}  G^{\alpha}_{t-s}\right)(\x, \z)   \left\| \sigma(u_s(\z))-\sigma(u_{t-\beta\e^{\alpha}} (\tilde \x)) \right\|_{L^k(\Omega)}.
        \end{align*}         }
  { 
        By H\"older's inequality, we have that for any $p>\max\{\frac{d}{\gamma}, \frac{\alpha^2}{\alpha-d+\gamma}\}$,
               \begin{align*}
     Q_1
     \le &\,  2k  \int_{t-\beta \e^{\alpha}}^tds\left[ \int_{\mathbb R^d}d\y\int_{\mathbb R^d} d\z \left|\left(\nabla_{\e\boldsymbol e}   G^{\alpha}_{t-s}\right)(\x, \y) \right|\cdot \left|\left(\nabla_{\e\boldsymbol e}  G^{\alpha}_{t-s}\right)(\x, \z) \right|\right]^{\frac{1}{p}}\\
      &\,\,\,\, \,\,\,\,
     \cdot   \sup_{(s, \y)\in \mathbf B_{\beta}(\x, t; \e) }  \left\| \sigma(u_s(\y))-\sigma(u_{t-\beta\e^{\alpha}} (\tilde \x)) \right\|_{L^k(\Omega)}^2 \\
     &\,\,\,\, \,\,\,\, \cdot\left[\int_{\mathbb R^d}d\y\int_{\mathbb R^d} d\z \left|\left(\nabla_{\e\boldsymbol e}   G^{\alpha}_{t-s}\right)(\x, \y) \right|\cdot \|\y-\z\|^{-\frac{p}{p-1}(d-\gamma)}\cdot\left|  \left(\nabla_{\e\boldsymbol e}  G^{\alpha}_{t-s}\right)(\x, \z)\right|\right]^{\frac{p}{p-1}}.
        \end{align*}         }
  { 
      By     \eqref{eq Green 5} and \eqref{eq elem}, we have
       \begin{align*}
    &  \int_{\mathbb R^d}d\y\int_{\mathbb R^d} d\z
    \left|\left(\nabla_{\e\boldsymbol e}   G^{\alpha}_{t-s}\right)(\x, \y)  \right| \cdot \left| \left(\nabla_{\e\boldsymbol e}  G^{\alpha}_{t-s}\right)(\x, \z) \right|\\
      \le&\,  c_{2,2}\frac{\e}{(t-s)^{\alpha}} \int_{\mathbb R^d}d\y\int_{\mathbb R^d} d\z   \left( G^{\alpha}_{t-s} (\x-\y) +G^{\alpha}_{t-\e-s} (\x-\y)\right)  \left( G^{\alpha}_{t-s} (\x-\z) +G^{\alpha}_{t-\e-s} (\x-\z)\right) \\
            \le&\,  4c_{2,2}\frac{\e}{(t-s)^{\alpha}}.
       \end{align*}         }
  { 
         By  Proposition \ref{prop Holder}, we have that for
     \begin{align*}
         \sup_{(s, \y)\in \mathbf B_{\beta}(\x, t; \e) }  \left\| \sigma(u_s(\y))-\sigma(u_{t-\beta\e^{\alpha}} (\tilde \x)) \right\|_{L^k(\Omega)}^2 \le c_{2,8} \e^{2H} \left[ \beta^{H/\alpha} +(2\delta)^{H}\right]^2.           \end{align*}
 By  \eqref{eq elem}, we obtain that
    \begin{align*}
   & \int_{\mathbb R^d}d\y\int_{\mathbb R^d} d\z \left|\left(\nabla_{\e\boldsymbol e}   G^{\alpha}_{t-s}\right)(\x, \y) \right|\cdot \|\y-\z\|^{-\frac{p}{p-1}(d-\gamma)}\cdot \left|  \left(\nabla_{\e\boldsymbol e}  G^{\alpha}_{t-s}\right)(\x, \z)\right|\\
    \le &\, 4 \int_{\mathbb R^d}d\y\int_{\mathbb R^d} d\z\,    G^{\alpha}_{t-s}(\x-\y)\|\y-\z\|^{-\frac{p}{p-1}(d-\gamma)} G^{\alpha}_{t-s}(\x-\z)\\
= &\,  c_{3,8}\int_{\mathbb R^d}d\xi\, \|\xi\|^{-\frac{1}{p-1}(p\gamma-d) }\,e^{-2(t-s)\|\xi\|^{\alpha}}  \\
= &\, \left(  c_{3,8}  \int_{\mathbb R^d} \, \|\xi\|^{-\frac{p\gamma-d}{p-1}  }\,e^{-2 \|\xi\|^{\alpha}}d\xi\right)\cdot (t-s)^{-\frac{p(d-\gamma)}{(p-1)\alpha}},
       \end{align*}         }
  { 
       where $c_{3,8}\in(0,\infty)$,   the integral $\int_{\mathbb R^d} \, \|\xi\|^{-\frac{1}{p-1}(p\gamma-d) }\,e^{-2 \|\xi\|^{\alpha}}d\xi$ is finite   because    $\frac{p\gamma-d}{p-1}<d$.
  Thus,         }
  { 
              \begin{equation}\label{eq local appr7}
              \begin{split}
   Q_1
     \le&\, c_{3,9}   \e^{2H+\frac{1}{p}} \left[ \beta^{H/\alpha}+(2\delta)^{H}\right]^2   \int_{t-\beta \e^{\alpha}}^t \,(t-s)^{-\frac{\alpha}{p}-\frac{d-\gamma}{\alpha}}  ds \\
      = &\, c_{3,9} \left(1- \frac{\alpha}{p}-\frac{d-\gamma}{\alpha}\right)^{-1}\cdot  \e^{2(\alpha-d+\gamma)}\left[ \beta^{H/\alpha}+(2\delta)^{H}\right]^2 \cdot   \beta^{1-\frac{d-\gamma}{\alpha}-\frac{\alpha}{p}} \e^{-\frac{\alpha^2}{p}+\frac{1}{p}}.
           \end{split}
           \end{equation}
                     }
                     
  {Next, we estimate the cost of estimating $u_t(\tilde \x)$ by $u_{t-\beta\e^{\alpha}}(\tilde \x)$. By the H\"older inequality,  \eqref{eq u xt}  and  Lemma \ref{lem Z spatial}(a), we have         }
  { 
  \begin{equation}\label{eq Q2 1}
     \begin{split}
     Q_2 := &\,    \left\|\left[\sigma\left(u_t(\tilde \x)\right)-\sigma\left(u_{t-\beta\e^{\alpha}}(\tilde \x)\right) \right]  \int_{ \mathbf B_{\beta}(\x, t; \e)} \left(    \nabla_{\e\boldsymbol e}   G^{\alpha}_{t-s}\right)(\x, \y)   M(ds,d\y)\right\|_k^2\\
     \le &\,  L_{\sigma}^2 \left\| u_t(\tilde \x)-u_{t-\beta\e^{\alpha}}(\tilde \x) \right\|^2_{2k}\cdot  \left\| \int_0^t\int_{\mathbb R^d} \left(    \nabla_{\e\boldsymbol e}   G^{\alpha}_{t-s}\right)(\x, \y)   M(ds,d\y)\right\|_{2k}^2\\
     \le&\,  c_{2,7}^2 L_{\sigma}^2 \left( \beta\e^{\alpha}  \right)^{2H/\alpha} \cdot   \e^{\alpha-d+\gamma}= c_{2,7}^2 L_{\sigma}^2 \e^{2(\alpha-d+\gamma)}\beta^{\frac{\alpha-d+\gamma}{\alpha}}.
                \end{split}
      \end{equation}   
            }
  { 
       Since $\delta=1+\beta^{1+\frac{1}{H} }\le 2\beta^{1+\frac{1}{H}}$,  we can conclude  from \eqref{eq local appr7} and \eqref{eq Q2 1} that
       \begin{align*}
       \sqrt{Q_1}+ \sqrt{Q_2} 
       \le c_{3,10} \e^{\alpha-d+\gamma}\beta^{1+\alpha-d+\gamma}\cdot  \left[\beta^{-\frac{(\alpha-1)(\alpha-d+\gamma)}{2\alpha}-\frac{\alpha}{2p}} \e^{-\frac{\alpha^2}{2p}+\frac{1}{2p}}+\beta^{-1-(\alpha-d+\gamma)+\frac{\alpha-d+\gamma}{2\alpha})}\right],
       \end{align*}
       where $c_{3,10}\in (0,\infty)$. Since $\beta=\e^{-l}$ for some $l>0$,   taking $p$ large enough such that $\beta^{-\frac{(\alpha-1)(\alpha-d+\gamma)}{2\alpha}-\frac{\alpha}{2p}} \e^{-\frac{\alpha^2}{2p}+\frac{1}{2p}}<1$, we obtain that
       \begin{align*}
       \sqrt{Q_1}+ \sqrt{Q_2}\le 2c_{3,10} \e^{\alpha-d+\gamma}\beta^{1+\alpha-d+\gamma}.
       \end{align*}        
        }
  { This and Minkowski's inequality together imply this  lemma.     }   \end{proof}

    \section{Proof of Theorem \ref{thm spatial gradient}}

         \begin{proof}[Proof of Theorem \ref{thm spatial gradient}]

         According to the proof of Theorem 1.1 in \cite{FKM2015},  it is sufficient to get the  following estimate:   For any fixed $T_1>T_0>0$,
     \begin{equation}\label{eq q1}
     \begin{split}
    \sup_{t\in[T_0,T_1]}\sup_{\x\in \mathbb R^d} \sup_{\|\boldsymbol  {e}\|=1  } \left\|\left(\nabla_{\e \boldsymbol  e}u_t\right)(\x)-c_{\alpha, d,\gamma}\sigma(u_t(\tilde \x))  \left(\nabla_{\e  \boldsymbol  e}B^{H}\right)(\x)\right\|_k
    \le \,  A \e^{H+\eta},
     \end{split}
     \end{equation}
where  $ \tilde \x\in \mathbf B(\x; \e \delta), k\in [2,k_0], A>0$ and $\eta>0$.

Recall the decomposition of $u$ in \eqref{eq decomposition}.  By using the estimates in  \eqref{eq u0 3} and \eqref{eq regul Y}, we know that to prove \eqref{eq q1}, it suffices to prove that
     \begin{equation}\label{eq q2}
     \begin{split}
    \sup_{t\in[T_0,T_1]} \sup_{\x\in \mathbb R^d} \sup_{\| \boldsymbol {e}\|=1  } \left\|\left(\nabla_{\e \boldsymbol  e}X_t\right)(\x)-c_{\alpha, d,\gamma}\sigma(u_t(\tilde \x))  \left(\nabla_{\e \boldsymbol   e}B^{H}\right)(\x)\right\|_k 
    \le    A \e^{H+\eta},
     \end{split}
     \end{equation}
  where  $ \tilde \x\in \mathbf B(\x; \e \delta), k\in [2,k_0], A>0$ and $\eta>0$.

  Let us split the preceding expectation in two parts as follows:
 $$
 \left\|\left(\nabla_{\e \boldsymbol  e}X_t\right)(\x)-c_{\alpha, d,\gamma}\sigma(u_t(\tilde \x))  \left(\nabla_{\e \boldsymbol  e}B^{H}\right)(\x)\right\|_k
\le I_1+I_2,
 $$
     where
      \begin{align}\label{eq qI1}
I_1:= &\,  \left\|\left(\nabla_{\e\boldsymbol e}X_t\right)(\x)- \sigma(u_t(\tilde \x))  \left(\nabla_{\e \boldsymbol  e} Z_t\right)(\x)\right\|_k; \\
I_2:=&\,  \left\|\sigma(u_t(\tilde \x))  \left(\nabla_{\e\boldsymbol e} Z_t\right)(\x) -c_{\alpha, d, \gamma}\sigma(u_t(\tilde \x))   \left(\nabla_{\e \boldsymbol  e} B^{H}\right)(\x)\right\|_k.  \label{eq qI2}
     \end{align}

   By    Lemma \ref{lem Z spatial}(a) and Proposition  \ref{prop Holder}, there exists a constant $c_{4,1}>0$ such that
 \begin{equation}\label{eq qI11}
     \begin{split}
            \left\|   \left(\nabla_{\e \boldsymbol  e} Z_t\right)(\x) \big[\sigma(u_t(\tilde \x))-\sigma(u_t(\x)) \big]\right\|_k
            \le &\,  L_{\sigma} \left\|   \left(\nabla_{\e \boldsymbol  e} Z_t\right)(\x)  \right\|_{2k} \cdot \left\|u_t(\tilde \x)-u_t(\x) \right\|_{2k}\\
           \le &\, c_{4,1} \e^{\alpha-d+\gamma} \beta^{1+\frac{\alpha-d+\gamma}{2}}.
                    \end{split}
              \end{equation}
          By Corollary \ref{coro local appr2} and Lemma \ref{lem local appr3}, we have
                          \begin{equation}\label{eq qI12}
     \begin{split}
           &\left\|\left(\nabla_{\e \boldsymbol  e}X_t\right)(\x)- \sigma(u_t(\x))  \left(\nabla_{\e \boldsymbol  e} Z_t\right)(\x) \right\|_k \\
            \le  &\, c_{4,2}\e^{\frac{\alpha-d+\gamma}{2}} \beta^{-\frac{d+2-\alpha-\gamma}{2\alpha}}+c_{4,3}\e^{\alpha-d+\gamma}\beta^{1+\frac{\alpha-d+\gamma}{2}}.
                    \end{split}
              \end{equation}
      where $c_{4,2}$ and $c_{4,3}$ denote  the finite constants   that do  not depend on the values of $\x\in \mathbb R^d$, $t\in [T_0,T_1]$, $\e\in (0,1)$ and the unit vector $\boldsymbol {e}$ in $\mathbb R^d$.

      Define $\beta:=\e^{-l}>1$. Since the left-hand side of \eqref{eq qI12} does not depend on $\beta$, we can optimize the right-hand side over $l>0$, to find that the best bound in \eqref{eq qI12} is obtained when
      \begin{align}\label{eq l}
      l:=\frac{\alpha(\alpha-d+\gamma)}{(d+2+\alpha-\gamma)+\alpha(\alpha-d+\gamma)}.
      \end{align}
       This particular choice yields
                 \begin{align}\label{eq uZ}
I_1 \le c_{4,4} \e^{H+\frac{d+2-\alpha-\gamma}{2\alpha}l},
      \end{align}
 for some constant $c_{4,4}\in (0,\infty)$.

                     By Lemma   \ref{Lem  spatial} and using the same argument in the proof of Theorem 1.1 in \cite{FKM2015}, we obtain that  there exists  a   constant $c_{4,5}\in (0,\infty)$ such that
                     \begin{align}\label{eq l2}
      I_2\le c_{4,5} \e, \ \ \ \ \text{uniformly for all } \e>0.
      \end{align}
     The estimates      \eqref{eq uZ} and     \eqref{eq l2} together imply the desired result \eqref{eq q1}.

                    The proof is complete.
                       \end{proof}

     By using the arguments in \cite{FKM2015} and using the estimates in the proof of Theorem \ref{thm spatial gradient}, we can obtain
     Corollaries  \ref{cor  t LIL}-\ref{prop variations}, whose proofs are omitted here.

  \vskip0.5cm

\noindent{\bf Acknowledgments }   { The author is grateful to the anonymous referee for the corrections. He would like to thank  Professor Y. Xiao for his useful discussions.  This research  is supported by  NNSFC grants 11871382.}
      \vskip0.5cm

\bigskip

\end{document}